\theoremstyle{plain}
\theoremstyle{plain}
\newtheorem{theorem}{Theorem}[section]
\newtheorem{corollary}[theorem]{Corollary}
\newtheorem{proposition}[theorem]{Proposition}
\newtheorem{lemma}[theorem]{Lemma}
\theoremstyle{definition}
\newtheorem{definition}[theorem]{Definition}
\theoremstyle{remark}
\newtheorem{remark}[theorem]{Remark}
\theoremstyle{claim}
\newtheorem{claim}[theorem]{Claim}
\theoremstyle{definition}
\newtheorem{notation}[theorem]{Notation}
\theoremstyle{definition}
\theoremstyle{remark}
\mathchardef\emptyset="001F
\numberwithin{equation}{section}
\def\Ddots{\mathinner{\mkern1mu\raise\p@
\vbox{\kern7\p@\hbox{.}}\mkern2mu
\raise4\p@\hbox{.}\mkern2mu\raise7\p@\hbox{.}\mkern1mu}}
\title[]
{Switching in time-optimal problem \\with control in a ball.}
\author[A. A. Agrachev]{Andrei A. Agrachev}
\address[A. A. Agrachev]{SISSA, 34136 Trieste,  Italy; Steklov Mathematical Institute, 119991 Moscow, Russia}
\email[A. Agrachev]{agrachev@sissa.it}
\author[C. Biolo]{Carolina Biolo}
\address[Carolina Biolo]{SISSA, Via Bonomea 265, 34136 Trieste, Italy}
\email[Carolina Biolo]{cbiolo@sissa.it}
\date{}
\begin{document}

\begin{abstract}
In this paper we analyse local regularity of time-optimal controls and trajectories for an $n$-dimensional affine control system with a control parameter, taking values in a $k$-dimensional closed ball. In the case of $k=n-1$, we give sufficient conditions in terms of Lie bracket relations for all optimal controls to be smooth or to have only isolated jump discontinuities.
\end{abstract}
\maketitle
\tableofcontents
\section{Introduction}

In this paper, we continue to study singularities of the extremals of the time-optimal problem for the control system of the form:
$$
\dot q=f_0(q)+\sum_{i=1}^ku_if_i(q),\quad q\in M,\ (u_1,\ldots,u_k)\in U, \eqno (1.1)
$$
where $M$ is a smooth $n$-dimensional manifold, $U=\{u\in \mathbb{R}^k : ||u||\leq 1\}$ is the $k$-dimensional ball, and $f_0,\,f_1,\,\ldots,\,f_k$ are smooth\footnote{We work in $\mathcal{C}^\infty (M)$ category but all results are true for $\mathcal{C}^2(M)$ vector fields.} vector fields. We also assume that $f_1(q),\ldots,f_k(q)$ are linearly independent in the domain under consideration.

If $k=n$, then all extremals are smooth; otherwise they may be nonsmooth and there exists a vast literature dedicated to the case $k=1$. Some references can be found in paper \cite{AB}, where we studied the simplest intermediate case $k=2,\ n=3$. It appears that the developed in \cite{AB} techniques work in much more general setting than we expected and can be efficiently applied to any pair $k<n$ giving a clear explicit description of less degenerate singularities (see Theorem \ref{resultneq} of the current paper).

Moreover, if $k=n-1,\ q\in M$, and $f_0,f_1,\ldots,f_{n-1}$ is a generic germ of $n$-tuple of vector fields at $q$, then the germs of extremal at $q$ may have only these less degenerate singularities. More precisely, let us define a vector $a\in\mathbb R^{n-1}$ and a matrix $A\in\mathrm{so}(n-1)$ by the formulas:
$$
a(q)=\{\det\left(f_1(q),\ldots,f_{n-1}(q),[f_0,f_i](q)\right)\}_{i=1}^{n-1},
$$
$$
A(q)=\{\det\left(f_1(q),\ldots,f_{n-1}(q),[f_i,f_j](q)\right)\}_{i,j=1}^{n-1},
$$
where $[\cdot,\cdot]$ is a Lie bracket. We have the following:

\begin{theorem}
\label{codim1thm}
 If
\begin{equation}
\label{codim1}
a(\bar{q})\notin A(\bar{q})S^{n-2},
\end{equation} then there exists a neighbourhood $O_{\bar{q}}$ of $\bar{q}$ in $M$ such that any time-optimal trajectory contained in $O_{\bar{q}}$ is piecewise smooth with no more than 1 non smoothness point.
\end{theorem}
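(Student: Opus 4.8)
The plan is to apply the Pontryagin Maximum Principle and reduce the statement to the behaviour of the vector switching function near its zeros. \emph{(PMP; reduction.)} Let $q(\cdot)$ be a time-optimal trajectory contained in $O_{\bar q}$ with control $u(\cdot)$, lifted to an extremal $\lambda(\cdot)$, $\lambda(t)\neq0$, $\dot\lambda=\vec H_{u(t)}(\lambda)$, where $H_u(\lambda)=h_0(\lambda)+\sum_{i=1}^{n-1}u_i h_i(\lambda)$ and $h_i(\lambda)=\langle\lambda,f_i(\pi\lambda)\rangle$. Put $\phi(t)=\bigl(h_1(\lambda(t)),\dots,h_{n-1}(\lambda(t))\bigr)\in\mathbb R^{n-1}$. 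The maximum condition gives $u(t)=\phi(t)/|\phi(t)|$ wherever $\phi(t)\neq0$, and on such intervals the extremal solves a smooth ODE, so $q,\lambda,u$ are $C^\infty$ there. Hence every non-smoothness point of $q$ belongs to $Z=\{t:\phi(t)=0\}$, and it suffices to show that, after shrinking $O_{\bar q}$, $Z$ has at most one point, at which $u$ has one-sided limits.

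\emph{(The switching equation and the role of \eqref{codim1}.)} Differentiation gives $\dot\phi_i=\{H_u,h_i\}=\langle\lambda,[f_0,f_i]\rangle+\sum_j u_j\langle\lambda,[f_j,f_i]\rangle$; writing $b(\lambda)=\bigl(\langle\lambda,[f_0,f_i]\rangle\bigr)_i$ and $C(\lambda)=\bigl(\langle\lambda,[f_j,f_i]\rangle\bigr)_{ij}$ (skew-symmetric), this is $\dot\phi=b(\lambda)+C(\lambda)u$, i.e. $\dot\phi=b(\lambda)+C(\lambda)\phi/|\phi|$ on $\{\phi\neq0\}$. Fix coordinates in which $\det$ is the volume. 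At $t_0\in Z$ the covector $\lambda(t_0)$ annihilates $\mathrm{span}\{f_1(q(t_0)),\dots,f_{n-1}(q(t_0))\}$, so $\langle\lambda(t_0),v\rangle=\gamma_0\det(f_1,\dots,f_{n-1},v)$ for some $\gamma_0\neq0$, whence $b(\lambda(t_0))=\gamma_0\,a(q(t_0))$ and $C(\lambda(t_0))=-\gamma_0\,A(q(t_0))$. Since \eqref{codim1} is open and $S^{n-2}$ is compact, after shrinking $O_{\bar q}$ we get $|a(q)-A(q)w|\geq\varepsilon_0>0$ for all $q\in O_{\bar q}$, $|w|=1$. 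Normalising the maximised Hamiltonian to $1$ — which at any $t_0\in Z$ forces $f_0(q(t_0))\notin\mathrm{span}\{f_1,\dots,f_{n-1}\}$, so $|\gamma_0|$ is bounded below uniformly — and using $|u(t)|\equiv1$ off $\{\phi=0\}$, we obtain, uniformly over all such extremals,
$$|\dot\phi(t)|\ \geq\ \varepsilon\ >\ 0\qquad\text{whenever }\ 0<|\phi(t)|<\delta .$$

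\emph{($Z$ has at most one point.)} Near $t_0\in Z$, $\dot\phi=b(\lambda)+C(\lambda)\phi/|\phi|$ is a small perturbation of the homogeneous model $\dot x=b_0+C_0\,x/|x|$ on $\mathbb R^{n-1}\setminus\{0\}$, $b_0=\gamma_0 a(q_0)$, $C_0=-\gamma_0 A(q_0)$, for which \eqref{codim1} says exactly $b_0+C_0w\neq0$ for all $|w|=1$; in polar coordinates $x=\rho\omega$ with $d\tau=dt/\rho$ the angular part obeys $\omega'=b_0+C_0\omega-\langle\omega,b_0\rangle\omega$ on $S^{n-2}$ and $\frac{d}{d\tau}\log\rho=\langle\omega,b_0\rangle$. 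Two regimes occur, according to whether $a(\bar q)$ lies outside or inside the solid ellipsoid $A(\bar q)\overline{B}^{\,n-1}$. If $a(\bar q)\notin A(\bar q)\overline{B}^{\,n-1}$ (automatic when $A(\bar q)$ is singular, in particular whenever $n$ is even), then the symmetric matrix $M(q)=a(q)a(q)^{\top}+A(q)^2$, which satisfies $\langle\ell,M(q)\ell\rangle=\langle a(q),\ell\rangle^2-|A(q)\ell|^2$, is not negative semidefinite near $\bar q$, so a fixed covector $\ell$ has $|\langle a(q),\ell\rangle|>|A(q)\ell|$ on $O_{\bar q}$; then $\frac{d}{dt}\langle\phi(t),\ell\rangle$ keeps a constant sign whenever $|\phi(t)|<\delta$, so $t\mapsto\langle\phi(t),\ell\rangle$ is strictly monotone there and $\phi$ vanishes at most once while it remains in $\{|\phi|<\delta\}$. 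If instead $a(\bar q)\in A(\bar q)\overline{B}^{\,n-1}\setminus A(\bar q)S^{n-2}$ — which forces $A(\bar q)$, hence $C_0$, invertible — the angular equation has no equilibrium, $\omega$ rotates, $\log\rho$ stays bounded, and $\phi$ cannot reach $0$ at all (the situation treated for $n=3$ in \cite{AB}). In either regime, shrinking $O_{\bar q}$ so that the trajectory cannot run through a large loop in $\phi$-space without leaving $O_{\bar q}$ (using $|\dot\phi|\leq\mathrm{const}$ and that $f_0\notin\mathrm{span}\{f_i\}$ near a zero bounds $|\dot q|$ from below) excludes a second zero; and near the single zero $\phi$ is a converging ray followed by an emerging ray, so $u=\phi/|\phi|$ has one-sided limits and the non-smoothness point is at worst a jump.

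\emph{The main obstacle} is the third step: justifying the comparison with the homogeneous model and, above all, ruling out that $\phi$ leaves $0$ and returns, or that $\phi/|\phi|$ oscillates without a limit as $t\to t_0$ (which would break piecewise smoothness). This is exactly where \eqref{codim1} is used decisively: it keeps $b_0+C_0w$, hence $\dot\phi$, bounded away from $0$, which excludes chattering and singular arcs and supplies, according to the regime, either the monotone linear functional $\ell$ or the equilibrium-free (rotating) angular dynamics. Controlling the possible global return of $\phi$ to $0$ inside a shrunken $O_{\bar q}$, and the book-keeping for abnormal extremals, are the remaining delicate points; the case $n-1=2$ of this programme is carried out in detail in \cite{AB}.
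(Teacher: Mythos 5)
Your reduction via the Pontryagin Maximum Principle to the vector switching function $\phi=(h_1,\dots,h_{n-1})$, the identification $b(\lambda)=\gamma_0\,a(q)$, $C(\lambda)=-\gamma_0\,A(q)$ at points of the singular locus, and the dichotomy $a(\bar q)\notin A(\bar q)\overline{B}^{\,n-1}$ versus $a(\bar q)\in A(\bar q)B^{n-1}$ are correct and coincide exactly with the paper's cases \eqref{condnotin} and \eqref{condin}. Your support-function observation — $a\notin A\overline{B}^{\,n-1}$ iff there is a fixed $\ell$ with $|\langle a,\ell\rangle|>|A\ell|$, whence $t\mapsto\langle\phi(t),\ell\rangle$ is strictly monotone near the singular locus — is an elegant and genuinely more elementary route to ``$\phi$ vanishes at most once'' than the paper's phase-portrait analysis. (Two repairs there: the exit-time estimate needs a separating hyperplane for the velocity set $f_0+\{f_u:|u|\le1\}$, not merely $|\dot q|\ge m$, which is available because $a\notin A\overline{B}^{\,n-1}$ implies $0\notin f_0+\{f_u:|u|\le 1\}$; and the normalisation $\mathcal H=1$ fails for abnormal extremals — $\mathcal H=0$ at a switching point is compatible with \eqref{codim1}, e.g.\ $f_0=2f_1$ — so one should normalise $|\lambda(t_0)|=1$ instead.)

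The genuine gap is at the single zero $t_0$: you assert that ``$\phi$ is a converging ray followed by an emerging ray'', but nothing in your argument controls the direction $u(t)=\phi(t)/|\phi(t)|$ as $t\to t_0^\pm$, and this is precisely what ``piecewise smooth with one switching'' requires. The bound $|\dot\phi|\ge\varepsilon$ excludes accumulation of zeros of $\phi$, but it does \emph{not} exclude that $\phi$ spirals into $0$: writing $\phi=\rho\,\omega$ and rescaling time by $d\tau=dt/\rho$, one has $d\rho/d\tau=\rho\langle b,\omega\rangle$ and $\omega'=b+C\omega-\langle b,\omega\rangle\omega$, and $\rho$ can reach $0$ in finite real time while $\tau\to+\infty$ and $\omega(\tau)$ wanders on $S^{n-2}$ without converging (it suffices that $\langle b,\omega(\tau)\rangle$ be negative on average along the angular orbit); then $u$ has no one-sided limits and the conclusion fails. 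Ruling this out is the heart of the paper's proof and is exactly what your sketch omits: Claim \ref{349} (the angular field has precisely two zeros $u_\pm=[\pm d\,\mathrm{Id}+H_{IJ}]^{-1}H_{0I}$ under \eqref{condnotin}), Proposition \ref{7771} (every non-equilibrium angular orbit converges to $u_\pm$ as $\tau\to\pm\infty$, proved via the invariant Lorentz cone $x^2=|y|^2$), and the Shoshitaishvili normal form at the partially hyperbolic equilibria $\bar\lambda_{u_\pm}$, which together yield the jump formula \eqref{jumpi}. Your monotone functional only confines $\omega$ to a hemisphere, which for $n-1\ge 3$ is far from convergence. Two further unproved assertions: in the regime \eqref{condin} the statement ``$\log\rho$ stays bounded, so $\phi$ cannot reach $0$'' is exactly the paper's Lemma \ref{lemma1} (proved with the Lyapunov function $\rho\,\|v(\lambda)\|$), and singular arcs ($\phi\equiv 0$ on an interval, where your bound $|\dot\phi|\ge\varepsilon$ does not apply since $\|u\|<1$ there) are possible in that regime and are excluded only by the Goh condition, as in Proposition \ref{809}, which you never invoke.
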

Here $S^{n-2}=\{u\in\mathbb R^{n-1}: ||u||=1\}$ is the unit sphere.

If $n=3,\ k=2$, then inequality (\ref{codim1}) reads:
\begin{equation}
\label{case32}
{\det}^2\left(f_1(\bar{q}),f_2(\bar{q}),[f_0,f_1](\bar{q})\right)+
{\det}^2\left(f_1(\bar{q}),f_2(\bar{q}),[f_0,f_2](\bar{q})\right)\neq
{\det}^2\left(f_1(\bar{q}),f_2(\bar{q}),[f_1,f_2](\bar{q})\right).
\end{equation}
In this case, the result of Theorem 1.1 follows from \cite[Th.\,3.1]{AB}, but the cited result of \cite{AB} is a bit  stronger than this. Indeed, assumption (\ref{case32}) is more restrictive than the used in \cite[Th.\,3.1]{AB} assumption
$$
\mbox{rank}{\{f_1(\bar{q}),f_2(\bar{q}),[f_0,f_1](\bar{q}),[f_0,f_2](\bar{q}),[f_1,f_2](\bar{q})\}}=3.
$$

In the next section we recall necessary background from the optimal control theory: the Pontryagin maximum principle and the Goh condition. Theorem~1.1 is a corollary of the main result stated in Section \ref{Statement} and proved in Section \ref{Proof}. The proof is based on the blow-up techniques and the structure of partially hyperbolic equilibria.

\section{Preliminaries} \label{sec:preliminaries.section}

In this section we recall some basic definitions in Geometric Control Theory. For a more detailed introduction, see \cite{A}.
\begin{definition}
Given a $n$-dimensional manifold $M$, we call $\mathrm{Vec}(M)$ the \emph{set of smooth vector fields} on $M$: $f\in \mathrm{Vec}(M)$ if and only if $f$ is a smooth map with respect to $q\in M$ taking value in the tangent bundle, $$f:M\longrightarrow TM,
$$ such that if $q\in M$ then $f(q)\in T_q M$.\\
Each vector field defines a \emph{dynamical system} $$\dot{q}=f(q),$$
i. e. for each initial point $q_0\in M$ it admits a solution $q(t,q_0)$ on an opportune time interval $I$, such that $q(0,q_0)=q_0$ and
$$\frac{d}{dt}q(t)=f(q(t)),\quad \mathrm{a.}\,\mathrm{e}. \,t\in I .
$$
\end{definition}

\begin{definition}
$f\in \mathrm{Vec}(M)$ is a \emph{complete vector field} if , for each initial point $q_0\in M$, the solution $q(t,q_0)$ of the \emph{dynamical system} $\dot{q}=f(q)$ is defined for every $t\in \mathbb{R}$. If $f\in \mathrm{Vec}(M)$ has a compact support, it is a complete vector field.
\end{definition}

In our local study, we may assume without lack of generality that all vector fields under consideration are complete.

\begin{definition}
A \emph{control system} in $M$ is a family of dynamical systems
$$\dot{q}=f_u(q), \quad \mathrm{with}\,\,q\in M,\, \{f_u\}_{u\in U}\subseteq \mathrm{Vec}(M),
$$
parametrized by $u\in U\subseteq\mathbb{R}^k$, called \emph{space of control parameters}.\\
Instead of constant values $u\in U$, we are going to consider $L^\infty$ time depending functions taking values in $U$. Thus, we call $\mathcal{U}=\{u:I\rightarrow U,\,u\in L^\infty\}$ the \emph{set of admissible controls} and study the following control system
\begin{equation}
\label{control.system}
\dot{q}=f_u(q), \quad \mathrm{with}\,\,q\in M,\, u\in \mathcal{U}.
\end{equation}
\end{definition}
With the following theorem we want to show that, choosing an admissible control, it is guaranteed the locally existence and uniqueness of the solution of a control system for every initial point.
\begin{theorem}
Fixed an admissible control $u\in\mathcal{U}$, (\ref{control.system}) is a non-autonomous ordinary differential equation, where the right-hand side is smooth with respect to $q$, and measurable essentially bounded with respect to $t$, then, for each $q_0\in M$, there exists a local unique solution $q_u(t,q_0)$ such that $q_u(0,q_0)=q_0$ and it is lipschitzian with respect to $t$.
\end{theorem}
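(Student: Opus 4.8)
The statement is the Carathéodory existence and uniqueness theorem for ordinary differential equations, transported to a manifold; the plan is to reduce it to $\mathbb{R}^n$ in a coordinate chart and then run a contraction-mapping argument. First I would fix a chart $\varphi\colon O\to B\subseteq\mathbb{R}^n$ with $q_0\in O$ and $B$ a bounded open set, and rewrite the equation in coordinates as $\dot x=g(t,x)$, $x(0)=x_0:=\varphi(q_0)$, where $g(t,x)$ is the coordinate expression of $f_{u(t)}$ evaluated at $\varphi^{-1}(x)$. Since we have already reduced to complete (in particular, compactly supported) vector fields and $U$ is the closed unit ball, the fields $f_0,\ldots,f_k$ and their first derivatives are bounded on $\overline B$; hence there are constants $m,L\ge 0$ with $|g(t,x)|\le m$ and $|g(t,x)-g(t,y)|\le L\,|x-y|$ for all $x,y$ in a slightly smaller closed ball and almost every $t$. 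Moreover, for fixed $x$ the map $t\mapsto g(t,x)$ is measurable, being the composition of the measurable control $u(\cdot)$ with the continuous dependence of $f_u$ on $(u,x)$, and for a.e.\ $t$ the map $x\mapsto g(t,x)$ is smooth. These are precisely the Carathéodory hypotheses.

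Next I would pass to the equivalent integral equation $x(t)=x_0+\int_0^t g(s,x(s))\,ds$ and seek a solution in $C([-\delta,\delta],\overline{B'})$, where $\overline{B'}$ is a closed ball around $x_0$ on which the above bounds hold and $\delta>0$ is to be fixed. Whenever $x(\cdot)$ is continuous, $s\mapsto g(s,x(s))$ is measurable and bounded by $m$, so the right-hand side defines a map of this space into itself once $m\delta$ is smaller than the radius of $B'$; choosing also $L\delta<1$ makes it a contraction in the supremum norm. The Banach fixed point theorem then yields a unique fixed point $x(\cdot)$, which is absolutely continuous and solves $\dot x=g(t,x)$ for a.e.\ $t$; pushing forward by $\varphi^{-1}$ gives the local solution $q_u(t,q_0)$ with $q_u(0,q_0)=q_0$.

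The Lipschitz regularity in $t$ is read off from the integral equation: $|x(t)-x(s)|\le\int_s^t|g(\tau,x(\tau))|\,d\tau\le m\,|t-s|$, so the Lipschitz constant is the essential supremum of $|g|$ along the trajectory. For uniqueness, given two solutions $x_1,x_2$ on a common interval I would set $\phi(t):=|x_1(t)-x_2(t)|$, observe $\phi(t)\le\int_0^t L\,\phi(s)\,ds$, and conclude $\phi\equiv 0$ by Gronwall's inequality.

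There is no genuine obstacle here — the statement is standard — and the only point that requires a little care is the merely measurable dependence on $t$: one must check that $t\mapsto g(t,x(t))$ is measurable for continuous $x(\cdot)$, and adapt the Picard/fixed-point scheme to a right-hand side that is only $L^\infty$ in time rather than continuous. Both are taken care of by the Carathéodory framework recalled above.
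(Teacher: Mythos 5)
Your proof is correct and is the standard Carath\'eodory/Picard argument; the paper itself states this theorem as recalled background (referring to the textbook treatment in the geometric control literature) and gives no proof, so there is nothing to contrast it with. The one point genuinely requiring care --- measurability of $t\mapsto g(t,x(t))$ for continuous $x(\cdot)$ when the control is only $L^\infty$ --- is the point you flag and handle, so the argument is complete.
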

\begin{definition}
\label{007}
We denote
$$A_{q_0}=\{q_u(t,q_0) : t\geq 0,u\in\mathcal{U} \}
$$
the \emph{attainable set} from $q_0$. \\
We will write $q_u(t)=q_u(t,q_0)$ if we do not need to stress  that the initial position is $q_0$.
\end{definition}
\begin{definition}
An\emph{ affine control system} is a control system of the following form
\begin{equation}
\label{affine.control.system}
\dot{q}=f_0(q)+\sum^k_{i=1}u_if_i(q), \quad q\in M
\end{equation}
where $f_0,\ldots, f_k$ $\in \mathrm{Vec}(M)$ and $(u_1,\ldots, u_k)\in \mathcal{U}$, taking values in the set $U\subseteq \mathbb{R}^k$. \\
The uncontrollable term $f_0$ is called \emph{drift}. \\
\end{definition}
\subsection{Time-optimal problem}
\begin{definition}
Given the control system (\ref{control.system}), $q_0\in M$ and $q_1\in A_{q_0}$, the \emph{time-optimal problem} consists in minimizing the time of motion from $q_0$ to $q_1$ via admissible trajectories:
\begin{equation}
\label{time-optimal.problem}\left\lbrace \begin{array}{ll}
\dot{q}=f_u(q)&u\in \mathcal{U}\\
q_u(0,q_0)=q_0&\\
q_u(t_1,q_0)=q_1&\\
t_1 \rightarrow \min&
\end{array} \right.
\end{equation}
We call these minimizer trajectories \emph{time-optimal trajectories}, and \emph{time-optimal controls} the corresponding controls.
\end{definition}
\subsubsection{Existence of time-optimal trajectories}
Classical Filippov's Theorem (See \cite{A}) guarantees the existence of a time-optimal control for the affine control system if $U$ is a convex compact and $q_0$ is sufficiently close to $q_1$.
\subsection{First and second order necessary optimality condition}
Now we are going to introduce basic notions about Lie brackets, Hamiltonian systems and Poisson brackets, so that we present the first and second order necessary conditions of optimality: Pontryagin Maximum Principle, and Goh condition.
\begin{definition}
Let $f,g\in \mathrm{Vec}(M)$, we define their \emph{Lie brackets} the following vector field
$$[f,g](q)=\frac{1}{2}\left.\frac{\partial^2}{\partial t^2}\right|_{t=0}e^{-t g}\circ e^{-t f}\circ e^{t g}\circ e^{t f}(q), \quad  \forall q\in M.
$$
where $e^{-t f}$ is the flow defined by $-f$.\\
\begin{center}
\psscalebox{1.0 1.0} 
{
\begin{pspicture}(0,-1.4669921)(6.62,1.4669921)
\psbezier[linecolor=black, linewidth=0.04](1.66,-0.73105466)(2.52,-1.1510547)(4.44,-0.47105467)(5.08,-0.7310546875)
\psbezier[linecolor=black, linewidth=0.04](5.1096,-0.72670466)(5.165974,-0.28143632)(5.3937507,0.19085746)(5.7704,0.2845953035607818)
\psbezier[linecolor=black, linewidth=0.04](5.74,0.24894531)(5.06,1.5289453)(3.26,0.24894531)(2.66,0.6689453125)
\psbezier[linecolor=black, linewidth=0.04](2.68,0.6689453)(2.5118368,-0.043554686)(1.0338775,0.071445316)(1.06,-0.0910546875)
\psline[linecolor=black, linewidth=0.04, arrowsize=0.05291667cm 2.0,arrowlength=1.4,arrowinset=0.12]{->}(1.68,-0.7510547)(1.12,-0.2710547)
\psdots[linecolor=black, dotsize=0.18](1.68,-0.7510547)
\psdots[linecolor=black, dotsize=0.18](1.1,-0.01)
\rput[bl](2.56,-1.3910546){$e^{tf}$}
\rput[bl](5.6,-0.3910547){$e^{tg}$}
\rput[bl](4.2,1.2089453){$e^{-tf}$}
\rput[bl](1.3,0.46894532){$e^{-tg}$}
\rput[bl](1.36,-1.3510547){$q$}
\rput[bl](0.0,-0.8110547){$[f,g](q)$}
\rput[bl](-2.8,0.0){$e^{-t g}\circ e^{-t f}\circ e^{t g}\circ e^{t f}(q)$}
\end{pspicture}
}
\captionof{figure}{Lie Bracket}
\end{center}
\end{definition}
\begin{definition}
An \emph{Hamiltonian} is a smooth function on the cotangent bundle
$$h\in C^\infty(T^*M).
$$
The \emph{Hamiltonian vector field} is the vector field associated with $h$ via the canonical symplectic form $\sigma$
$$\sigma_\lambda (\cdot ,\overrightarrow{h})=d_\lambda h.
$$
We denote
$$\dot{\lambda}=\overrightarrow{h}(\lambda), \quad  \lambda \in T^*M,
$$
the \emph{Hamiltonian system}, which corresponds to $h$.\\
Let $(x_1,\ldots,x_n)$ be local coordinates in $M$ and $(\xi_1,\ldots,\xi_n,x_1,\ldots,x_n)$ induced coordinates in $T^*M,\ \lambda=\sum_{i=1}^n\xi_idx_i$. The \emph{symplectic form} has expression $\sigma=\sum^n_{i=1}d\xi_i\wedge dx_i$. Thus, in canonical coordinates, the Hamiltonian vector field has the following form
$$\overrightarrow{h}=\sum^n_{i=1}\left( \frac{\partial h}{\partial \xi_i}\frac{\partial}{\partial x_i} -\frac{\partial h}{\partial x_i}\frac{\partial}{\partial \xi_i}\right).
$$
Therefore, in canonical coordinates, it is
$$\left\lbrace
\begin{array}{l}
\dot{x}_i=\frac{\partial h}{\partial \xi_i}\\
\dot{\xi_i}=-\frac{\partial h}{\partial x_i}
\end{array}
\right.
$$
for $i=1,\ldots,n$.
\end{definition}
\begin{definition}
The \emph{Poisson brackets} $\{a,b\}\in \mathcal{C}^\infty(T^*M)$ of two Hamiltonians $a,b\in \mathcal{C}^\infty(T^*M)$ are defined as follows: $\{a,b\}=\sigma(\vec a,\vec b)$; the coordinate expression is:
$$\{a,b\}=\sum_{k=1}^n\left( \frac{\partial a}{\partial \xi_k}\frac{\partial b}{\partial x_k}-\frac{\partial a}{\partial x_k}\frac{\partial b}{\partial \xi_k}\right).
$$
\end{definition}
\begin{remark}
\label{poisson,lie}
Let us recall that, given $g_1$ and $g_2$ vector fields in $M$, considering the Hamiltonians $a_1(\xi,x)=\left\langle \xi, g_1(x)\right\rangle $ and $a_2(\xi,x)=\left\langle \xi, g_2(x)\right\rangle $, it holds
$$\{a_1,a_2\}(\xi,x)=\left\langle \xi, [g_1,g_2](x)\right\rangle.
$$
\end{remark}
\begin{remark}
\label{derivPoiss}
Given a smooth function $\Phi$ in $\mathcal{C}^\infty(T^*M)$, and $\lambda(t)$ solution of the Hamiltonian system $\dot{\lambda}=\overrightarrow{h}(\lambda)$, the derivative of $\Phi(\lambda(t))$ with respect to $t$ is the following
$$\frac{d}{dt}\Phi(\lambda(t))=\{h,\Phi\}(\lambda(t)).
$$
\end{remark}
\subsubsection{Pontryagin Maximum Principle}
\begin{theorem}[Pontryagin Maximum Principle - time-optimal problem]
Let an admissible control $\tilde{u}$, defined in the interval $t\in [0,\tau_1 ]$, be time-optimal for the system (\ref{control.system}), and let the Hamiltonian associated with this control system be the action on $f_u(q)\in T^*_q M$ of a covector $\lambda\in T^*_q M$: $$\mathcal{H}_u(\lambda)=\left\langle \lambda,f_u(q)\right\rangle . $$
Then there exists $\lambda(t)\in T_{q_{\tilde{u}}(t)}^*M$, for $t\in [0,\tau_1 ]$, called \emph{extremal} never null and lipschitzian, such that for almost all $t\in [0,\tau_1 ]$ the following conditions hold:
\begin{enumerate}
	\item $\dot{\lambda}(t)=\vec{\mathcal{H}}_{\tilde{u}}( \lambda(t))$
	\item $\mathcal{H}_{\tilde{u}}(\lambda(t))= \max_{u\in U} \mathcal{H}_u(\lambda(t))$ (Maximality condition)
\item $\mathcal{H}_{\tilde{u}}(\lambda(t))\geq0$.
\end{enumerate}
Given the canonical projection $\pi:TM\rightarrow M$, we denote $q(t)=\pi(\lambda(t))$ the \emph{extremal trajectory}.
\end{theorem}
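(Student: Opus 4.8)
The plan is to prove the maximum principle by the classical geometric method of \emph{needle variations} combined with a convex–topological separation at the terminal point, which fits the framework recalled above. First I would reduce time‑optimality to a boundary property. Pass to the extended manifold $\hat M=\mathbb R\times M$ carrying the augmented field $\hat f_u=\partial_{x_0}+f_u$, i.e. $\dot x_0=1,\ \dot q=f_u(q)$; the reference trajectory issued from $(0,q_0)$ reaches $(\tau_1,q_1)$, and by Definition \ref{007} the attainable set $\hat A_{(0,q_0)}$ of the extended system contains all points $(t,q_u(t,q_0))$. Time‑optimality of $\tilde u$ means exactly that $(\tau_1-s,q_1)\notin\hat A_{(0,q_0)}$ for every $s>0$, so the instantaneous time‑decrease direction $-\partial_{x_0}$ is forbidden at $(\tau_1,q_1)$. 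The whole statement thus becomes a maximum principle for a trajectory whose terminal direction $-\partial_{x_0}$ is not attainable, read back afterwards on $M$.

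Then, denote by $\Phi^t$ the flow of the non‑autonomous reference field $q\mapsto f_{\tilde u(t)}(q)$, so $\tilde q(t)=\Phi^t(q_0)$, and let $P^{\tau_1}_{\tau}=\bigl(\Phi^{\tau_1}\circ(\Phi^{\tau})^{-1}\bigr)_*:T_{\tilde q(\tau)}M\to T_{q_1}M$ be the differential of the transition map. For each Lebesgue point $\tau\in(0,\tau_1)$ of $\tilde u$ and each $w\in U$, the elementary needle variation (replace $\tilde u$ by the constant $w$ on $[\tau-\varepsilon,\tau]$, $\varepsilon\to 0^+$) contributes to first order the terminal tangent vector
$$
v_{\tau,w}=P^{\tau_1}_{\tau}\bigl(f_w(\tilde q(\tau))-f_{\tilde u(\tau)}(\tilde q(\tau))\bigr)\in T_{q_1}M,
$$
with vanishing $x_0$–component in $\hat M$. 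Let $K\subseteq T_{(\tau_1,q_1)}\hat M$ be the closed convex cone generated by these $v_{\tau,w}$ together with $\pm\hat f_{\tilde u(\tau_1)}(q_1)$ (the latter because continuing, respectively truncating, the reference trajectory keeps one inside $\hat A_{(0,q_0)}$).

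The crux is the \emph{separation lemma}: optimality forces $-\partial_{x_0}\notin\operatorname{int}K$. I would prove the contrapositive by a topological (Brouwer degree / open–mapping) argument: if $-\partial_{x_0}$ lay in the interior of $K$, then finitely many needles would positively span a neighbourhood of $-\partial_{x_0}$, and the endpoint map of the associated finite‑parameter family of perturbed controls would be open at $\tilde u$; hence $\hat A_{(0,q_0)}$ would contain points $(\tau_1-s,q_1)$ with $s>0$, contradicting the boundary property. Turning the first‑order spanning of the variational cone into genuine attainability — the passage from the differential to the image of a nonlinear map — is the real obstacle and the technical heart of the proof.

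Finally I would harvest the covector. Separation provides a nonzero $\hat\lambda_1=(\lambda_0,\lambda(\tau_1))\in T^*_{(\tau_1,q_1)}\hat M$ with $\langle\hat\lambda_1,\hat v\rangle\le 0$ for all $\hat v\in K$ and $\langle\hat\lambda_1,-\partial_{x_0}\rangle\ge 0$, i.e. $\lambda_0\le 0$. Since $\pm\hat f_{\tilde u(\tau_1)}\in K$, one gets $\langle\hat\lambda_1,\hat f_{\tilde u(\tau_1)}\rangle=0$, so $\mathcal H_{\tilde u(\tau_1)}(\lambda(\tau_1))=-\lambda_0\ge 0$ and $\lambda(\tau_1)\neq 0$ (otherwise $\hat\lambda_1=0$). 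Define $\lambda(t)=(P^{\tau_1}_t)^{*}\lambda(\tau_1)\in T^*_{\tilde q(t)}M$ by backward cotangent transport: in canonical coordinates $\mathcal H_u(\lambda)=\langle\xi,f_u(q)\rangle$ forces $\dot q=f_{\tilde u}(q)$ and the linear adjoint equation for $\xi$, so $\lambda(t)$ is precisely the solution of $\dot\lambda=\vec{\mathcal H}_{\tilde u}(\lambda)$, which is (1); it is Lipschitz (bounded right–hand side) and never null (pullback by an invertible map). Testing $\langle\hat\lambda_1,v_{\tau,w}\rangle\le 0$ and using $\langle\lambda(\tau_1),P^{\tau_1}_\tau\eta\rangle=\langle\lambda(\tau),\eta\rangle$ yields $\mathcal H_w(\lambda(\tau))\le\mathcal H_{\tilde u(\tau)}(\lambda(\tau))$ for a.e. $\tau$ and all $w\in U$, which is (2). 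Condition (3) holds at $\tau_1$ by the above, and extends to all of $[0,\tau_1]$ because $t\mapsto\mathcal H_{\tilde u(t)}(\lambda(t))$ is constant: by Remark \ref{derivPoiss} its derivative is a Poisson bracket which, via Remark \ref{poisson,lie} and the maximality (2), vanishes at Lebesgue points.
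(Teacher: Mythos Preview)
The paper does not prove this theorem: the Pontryagin Maximum Principle is merely recalled in the preliminaries as a classical result, with the reader referred to \cite{A} for details. There is therefore no ``paper's own proof'' to compare against.

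Your outline is the standard geometric proof via needle variations and separation of the variational cone from the forbidden direction $-\partial_{x_0}$ in the time-extended system, which is precisely the approach taken in \cite{A}; in that sense you are aligned with the source the paper defers to, and you correctly flag the real technical crux (promoting first-order spanning of the cone to an open-mapping statement for the nonlinear endpoint map). One step that is too quick is your last sentence: constancy of $t\mapsto\mathcal H_{\tilde u(t)}(\lambda(t))$ does not follow by a direct appeal to Remark~\ref{derivPoiss}, because $\tilde u$ is only $L^\infty$ and the function depends on $t$ through $\tilde u(t)$ as well as through $\lambda(t)$. The clean argument is to work instead with the maximized Hamiltonian $M(\lambda)=\max_{u\in U}\mathcal H_u(\lambda)$, observe that $t\mapsto M(\lambda(t))$ is absolutely continuous, and show its a.e.\ derivative vanishes (at Lebesgue points of $\tilde u$ it coincides with $\{\mathcal H_{\tilde u(t)},\mathcal H_{\tilde u(t)}\}=0$); then (3) at $t=\tau_1$ propagates to all $t$. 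Apart from this, the plan is sound.
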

\subsubsection{Goh condition}
\label{subsecgoh}
Finally, we present the Goh condition, on the singular arcs of the extremal trajectory, in which we do not have information from the maximality condition of the Pontryagin Maxinum Principle. We state the Goh condition only for affine control systems (\ref{affine.control.system}).
\begin{theorem}[\emph{Goh condition}]
\label{Goh.condition}
Let $\tilde q(t),\ t\in[0,t_1]$ be a time-optimal trajectory corresponding to a control $\tilde u$. If $\tilde u(t)\in\mathrm{int}U$ for any $t\in(\tau_1,\tau_2)$,
then there exist an extremal $\lambda(t)\in T_{q(t)}^*M$ such that
\begin{equation}
\label{cond.di.Goh}
\left\langle \lambda(t),[f_i,f_j](q(t))\right\rangle =0,\quad\ t\in(\tau_1,\tau_2),\ i,j=1,\ldots,m.
\end{equation}
\end{theorem}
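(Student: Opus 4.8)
The plan is the classical route to the Goh condition for arcs on which the control is interior: first record the Pontryagin relations valid there, and then extract the bracket relations \eqref{cond.di.Goh} from the \emph{second}-order necessary condition for time optimality, realised through ``Goh variations'' --- zero-mean, high-frequency perturbations of the control in two of its components. To begin, put $h_i(\lambda)=\langle\lambda,f_i(q)\rangle$ for $i=0,1,\dots,k$, so that $\mathcal H_u=h_0+\sum_{i=1}^k u_i h_i$. On $(\tau_1,\tau_2)$ the function $u\mapsto\mathcal H_u(\lambda(t))$ is affine and, by the maximality condition, attains its maximum over the ball $U$ at the interior point $\tilde u(t)$; hence its linear part vanishes, i.e. $h_i(\lambda(t))=0$ for $i=1,\dots,k$, $t\in(\tau_1,\tau_2)$, and this holds for \emph{every} extremal associated with $\tilde u$. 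Differentiating along the flow (Remarks \ref{derivPoiss} and \ref{poisson,lie}) gives one further scalar identity per index, $\langle\lambda,[f_0,f_i](q)\rangle+\sum_j\tilde u_j\langle\lambda,[f_j,f_i](q)\rangle\equiv 0$, but this does not isolate the individual brackets $\langle\lambda,[f_i,f_j](q)\rangle$: that is a genuinely second-order effect.

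Next, choose the right extremal. Time optimality forces $q_1=\tilde q(t_1)$ to lie on the boundary of the attainable set $\mathcal A=\{q_u(s,q_0):0\le s\le t_1,\ u\in\mathcal U\}$ (standard, and the basis of the proof of the Pontryagin maximum principle), so fix a supporting covector $\lambda_1\in T^*_{q_1}M$ with $\langle\lambda_1,q'-q_1\rangle\le 0$ for all $q'\in\mathcal A$ near $q_1$, and let $\lambda(t)$ be the pull-back of $\lambda_1$ along the flow of $\tilde u$ (the adjoint solution with $\lambda(t_1)=\lambda_1$); this is an extremal and the previous paragraph applies to it. Fix now a pair $i\ne j$ and a Lebesgue point $\bar t\in(\tau_1,\tau_2)$ of $\tilde u$; the aim is $\langle\lambda(\bar t),[f_i,f_j](\tilde q(\bar t))\rangle=0$. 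For small $\varepsilon>0$ choose a profile $w^\varepsilon$ supported on $[\bar t,\bar t+\varepsilon]$, nonzero only in components $i,j$, with $\tilde u+w^\varepsilon$ admissible ($\tilde u(\bar t)\in\mathrm{int}\,U$) and $\int w^\varepsilon_i\,dt=\int w^\varepsilon_j\,dt=0$. The point of an interior arc is that the first-order endpoint variation is already annihilated by $\lambda_1$: transporting it back and using $h_l(\lambda(t))=0$ gives $\langle\lambda_1,\delta^{(1)}q\rangle=\int_{\bar t}^{\bar t+\varepsilon}\sum_l w^\varepsilon_l(t)\,h_l(\lambda(t))\,dt=0$.

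Now the second-order term. For the deformation $u_\tau=\tilde u+\tau w^\varepsilon$, expand the flow by the chronological (Volterra) series: the first-order term is $\lambda_1$-orthogonal, the zero-mean condition kills the ``pure square'' contributions, and antisymmetry of the Lie bracket collapses the symmetric double integrals, leaving
\[ \langle\lambda_1,\,q_{u_\tau}(t_1,q_0)-q_1\rangle=\tau^2\Big(\langle\lambda(\bar t),[f_i,f_j](\tilde q(\bar t))\rangle\,c(\varepsilon)+o(\varepsilon^2)\Big)+O(\tau^3), \]
where, up to a universal sign absorbed into $w^\varepsilon$, $c(\varepsilon)=\iint_{\bar t\le s\le t\le\bar t+\varepsilon}\big(w^\varepsilon_i(s)w^\varepsilon_j(t)-w^\varepsilon_j(s)w^\varepsilon_i(t)\big)\,ds\,dt$. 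This skew quantity is of order $\varepsilon^2$ and, for a suitable (e.g.\ four-step piecewise-constant, with the $i$- and $j$-components interleaving) profile, can be given either sign. So if $\langle\lambda(\bar t),[f_i,f_j](\tilde q(\bar t))\rangle\ne 0$, choose the sign of $w^\varepsilon$ so that the parenthesis is positive, fix $\varepsilon$ small enough that the $o(\varepsilon^2)$ is dominated, then $\tau>0$ small enough that $O(\tau^3)$ is dominated: the left-hand side becomes strictly positive, i.e.\ the admissible control $u_\tau$ steers $q_0$ in time $t_1$ to a point of $\mathcal A$ on the side of the supporting hyperplane that $\lambda_1$ forbids --- a contradiction. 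Hence $\langle\lambda(\bar t),[f_i,f_j](\tilde q(\bar t))\rangle=0$, and since $\bar t$ and $i,j$ were arbitrary, \eqref{cond.di.Goh} holds for this $\lambda$.

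The main obstacle lies entirely in the last step: organising the chronological expansion carefully enough to be certain that (i) the first-order endpoint variation is exactly $\lambda_1$-orthogonal and the $O(\varepsilon^2)$ ``square'' terms really are annihilated by the zero-mean condition; (ii) the surviving quadratic term equals $\langle\lambda(\bar t),[f_i,f_j](\tilde q(\bar t))\rangle\,c(\varepsilon)$ modulo an $o(\varepsilon^2)$ that is controlled \emph{uniformly} before $\tau$ is sent to zero; and (iii) the free terminal time is harmless --- handled most cleanly by working, as above, with the time-$\le t_1$ attainable set (which already builds in the ``stop earlier'' freedom), or by adjoining $t$ as an extra state and running a fixed-time argument. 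Granting this bookkeeping, the sign-indefiniteness of the Goh skew form finishes the proof.
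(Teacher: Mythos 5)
The paper does not actually prove Theorem \ref{Goh.condition}: it is quoted in the Preliminaries as a classical second-order necessary condition (the reference being \cite{A}) and is only \emph{used} later, in the proof of Proposition \ref{809}. So there is no in-paper argument to compare yours with; what follows measures your proposal against the standard proof of the Goh condition.

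Your skeleton is the classical one --- interior control plus maximality of an affine Hamiltonian gives $h_i(\lambda(t))=0$ on $(\tau_1,\tau_2)$, and the bracket identities must come from the second-order term of the chronological expansion tested on zero-mean two-component variations --- and your first paragraph is correct. The genuine gap is in how you close the contradiction. You posit a covector $\lambda_1$ with $\langle\lambda_1,q'-q_1\rangle\le 0$ for all $q'\in\mathcal A$ near $q_1$. But the attainable set is not convex, and a boundary point of a nonconvex set need not admit any local supporting hyperplane; the covector furnished by the maximum principle separates a convex \emph{approximating cone} of endpoint variations from a ray, not the set $\mathcal A$ itself. Consequently, exhibiting one admissible $u_\tau$ with $\langle\lambda_1,q_{u_\tau}(t_1,q_0)-q_1\rangle>0$ contradicts nothing: that point may perfectly well lie in $\mathcal A$ on the ``forbidden'' side of a hyperplane that was never supporting. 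The standard repair is quantitative rather than a single test variation: one shows that if $\langle\lambda(\bar t),[f_i,f_j](q(\bar t))\rangle\neq 0$ then high-frequency Goh variations make the Hessian of $u\mapsto\langle\lambda_1,q_u(t_1,q_0)\rangle$ restricted to the kernel of the differential have arbitrarily large positive (and negative) index for \emph{every} admissible multiplier $\lambda_1$, and then invokes a generalized open mapping theorem to conclude that $q_1$ lies in the interior of the attainable set, contradicting time-optimality. Separately, your item (ii) --- identifying the surviving quadratic term as $\langle\lambda(\bar t),[f_i,f_j](q(\bar t))\rangle\,c(\varepsilon)+o(\varepsilon^2)$ with uniform control before $\tau\to 0$ --- is deferred rather than carried out, and that computation (symmetric part reducing to first-order data, skew part producing the bracket) is precisely the content of the theorem. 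As written, the proposal is a correct plan along the standard route, but not yet a proof.
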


\subsection{Consequence of the optimality conditions.}
In this paper we are going to investigate the local regularity of time-optimal trajectories for the $n$-dimensional affine control system with a $k$-dimensional control:
\begin{equation}
\label{246}
\dot{q}=f_0(q)+\sum^k_{i=1}u_if_i(q),\quad q\in M, u\in\mathcal{U}
\end{equation}
where the space of control parameters is the $k$-dimensional closed unitary ball: $U=\{u\in\mathbb{R}^k : ||u||\leq 1\}$.\\
By the Pontryagin Maximum Principle, every time-optimal trajectory of our system has an extremal in the cotangent bundle $T^*M$ that satisfies a Hamiltonian system, given by the maximized Hamiltonian.
\begin{notation}
\label{notaz}
Let us call $h_i(\lambda)=\left\langle \lambda,f_i(q) \right\rangle $, $f_{ij}(q)=[f_i,f_j](q),\ f_{ijk}(q)=[f_i,[f_j,f_k]](q)$, $h_{ij}(\lambda)=\left\langle \lambda,f_{ij}(q) \right\rangle $, and $h_{ijk}(\lambda)=\left\langle \lambda,f_{ijk}(q) \right\rangle$, with $\lambda\in T^*_{q}M$ and $i,j,k\in\{0,1,\hdots, k\}$.\\
Moreover, we denote the following vector $H_{0I}(\lambda)=\{h_{0i}(\lambda)\}_{i}\in \mathbb{R}^k$ and $k\times k$ matrix $H_{IJ}(\lambda)=\{h_{ij}(\lambda)\}_{ij}$ with respect to $\lambda\in T^*M$.
\end{notation}
\begin{definition}
The \emph{singular locus} $\Lambda \subseteq T^*M$, is defined as follows:
$$
\Lambda=\{\lambda\in T^*M : h_1(\lambda)=\hdots=h_k(\lambda)=0\}.
$$
\end{definition}
The following proposition is an immediate corollary of the Pontryagin Maximum Principle.

\begin{proposition}
\label{776}
If an extremal $\lambda(t),\ t\in[0,t_1]$, does not intersect the singular locus $\Lambda$, then $\forall t\in [0,t_1]$
\begin{equation}
\label{time-opt.control}
\tilde{u}(t)=\left( \begin{array}{c}
\frac{h_1(\lambda(t))}{(h^2_1(\lambda(t))+\hdots+h^2_k(\lambda(t)))^{1/2}}\\
\vdots
\\
 \frac{h_k(\lambda(t))}{(h^2_1(\lambda(t))+\hdots+h^2_k(\lambda(t)))^{1/2}}
 \end{array}
 \right).
\end{equation}
Moreover, this extremal is a solutions of the Hamiltonian system defined by the Hamiltonian $\mathcal{H}(\lambda)=h_0(\lambda)+\sqrt{h^2_1(\lambda)+\hdots+h^2_k(\lambda)}$. Thus, it is smooth.
\end{proposition}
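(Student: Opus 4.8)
The plan is to obtain the maximized Hamiltonian directly from the maximality condition of the Pontryagin Maximum Principle, and then to verify that the extremal is an integral curve of the associated \emph{smooth} Hamiltonian vector field.

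First I would write the control-dependent Hamiltonian of (\ref{246}) as $\mathcal{H}_u(\lambda)=\langle\lambda,f_0(q)\rangle+\sum_{i=1}^k u_i\langle\lambda,f_i(q)\rangle=h_0(\lambda)+\sum_{i=1}^k u_i h_i(\lambda)$, which for each fixed $\lambda$ is an affine function of $u$. By the Cauchy--Schwarz inequality, on the closed unit ball $U$ this function attains its maximum $h_0(\lambda)+\bigl(h_1^2(\lambda)+\cdots+h_k^2(\lambda)\bigr)^{1/2}$, and, as long as the vector $(h_1(\lambda),\dots,h_k(\lambda))$ does not vanish --- that is, as long as $\lambda\notin\Lambda$ --- this maximum is attained at the single point $(h_1(\lambda),\dots,h_k(\lambda))/\bigl(h_1^2(\lambda)+\cdots+h_k^2(\lambda)\bigr)^{1/2}$ of the boundary sphere. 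Since by hypothesis $\lambda(t)$ never meets $\Lambda$, the maximality condition of the Maximum Principle, valid for a.e.\ $t$, forces $\tilde u(t)$ to coincide (after modification on a null set) with the right-hand side of (\ref{time-opt.control}) for every $t\in[0,t_1]$; this representative is continuous, since $\lambda(t)$ is continuous and stays in the open set $\{\lambda: h_1^2+\cdots+h_k^2>0\}$. Substituting this control into $\mathcal{H}_{\tilde u}$ gives precisely $\mathcal{H}(\lambda)=h_0(\lambda)+\sqrt{h_1^2(\lambda)+\cdots+h_k^2(\lambda)}$.

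Next I would identify the adjoint equation $\dot\lambda(t)=\vec{\mathcal{H}}_{\tilde u(t)}(\lambda(t))$ with $\vec{\mathcal{H}}(\lambda(t))$. Here I use the chain rule for Hamiltonian vector fields: if $\Phi=\varphi(a_0,\dots,a_m)$ is a smooth function of Hamiltonians $a_i\in\mathcal{C}^\infty(T^*M)$, then $\vec\Phi=\sum_i(\partial_i\varphi)(a_0,\dots,a_m)\,\vec{a_i}$, which follows at once from $\sigma(\cdot,\vec\Phi)=d\Phi$, the Leibniz rule, and the nondegeneracy of $\sigma$ (equivalently from the coordinate expression for Poisson brackets, cf.\ Remarks \ref{poisson,lie}--\ref{derivPoiss}). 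Applying this with $\varphi(h_0,\dots,h_k)=h_0+(h_1^2+\cdots+h_k^2)^{1/2}$ gives, on the open set where $h_1^2+\cdots+h_k^2>0$, the identity $\vec{\mathcal{H}}(\lambda)=\vec{h_0}(\lambda)+\sum_{i=1}^k \bigl(h_i(\lambda)/(h_1^2(\lambda)+\cdots+h_k^2(\lambda))^{1/2}\bigr)\vec{h_i}(\lambda)$. Evaluating at $\lambda=\lambda(t)$ and recalling from the previous step that the coefficient of $\vec{h_i}$ equals $\tilde u_i(t)$, the right-hand side becomes $\vec{h_0}(\lambda(t))+\sum_i\tilde u_i(t)\vec{h_i}(\lambda(t))=\vec{\mathcal{H}}_{\tilde u(t)}(\lambda(t))=\dot\lambda(t)$, as desired.

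Finally, since $[0,t_1]$ is compact and $\lambda(t)$ is continuous, its image lies in the open subset of $T^*M$ on which $\mathcal{H}$ is $\mathcal{C}^\infty$, so $\vec{\mathcal{H}}$ is a smooth vector field there; the Lipschitz curve $\lambda(t)$ satisfies $\dot\lambda=\vec{\mathcal{H}}(\lambda)$ for a.e.\ $t$, hence by uniqueness of solutions of the smooth ODE it is the integral curve of $\vec{\mathcal{H}}$ issuing from $\lambda(0)$ and is therefore $\mathcal{C}^\infty$ in $t$. The only delicate point is exactly this passage from the merely measurable control $\tilde u$ to the autonomous smooth equation $\dot\lambda=\vec{\mathcal{H}}(\lambda)$; it is here --- and only here --- that the hypothesis $\lambda(t)\notin\Lambda$ for all $t$ is essential, since it simultaneously yields uniqueness of the pointwise maximizer and smoothness of $\mathcal{H}$ along the extremal.
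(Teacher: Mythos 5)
Your argument is correct and is precisely the reasoning the paper leaves implicit: the paper states this proposition without proof, calling it ``an immediate corollary of the Pontryagin Maximum Principle,'' and your expansion (unique maximizer of the affine function $u\mapsto h_0+\sum u_ih_i$ on the ball via Cauchy--Schwarz off $\Lambda$, the chain rule $\vec{\mathcal H}=\vec{h_0}+\sum_i\bigl(h_i/\sqrt{\textstyle\sum_j h_j^2}\bigr)\vec{h_i}$, and smoothness of the Lipschitz solution of the resulting smooth autonomous ODE) is exactly the standard route. No gaps.
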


\begin{definition}
We will call \emph{bang arc} any smooth arc of a time-optimal trajectory $q(t)$, whose corresponding time-optimal control $\tilde{u}$ lies in the boundary of the space of control parameters: $\tilde{u}(t)\in \partial U$.
\end{definition}
\begin{corollary}
\label{017}
An arc of a time-optimal trajectory, whose extremal is out of the singular locus, is a bang arc.
\end{corollary}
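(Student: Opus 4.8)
The plan is to obtain the corollary as an immediate consequence of Proposition \ref{776}. Let $q(t)$, with $t$ ranging over the arc in question, be a piece of a time-optimal trajectory whose extremal $\lambda(t)$ stays out of the singular locus $\Lambda$. First I would observe that the restriction of a time-optimal trajectory to any subinterval is again time-optimal between its endpoints, so Proposition \ref{776} applies to this arc: it provides the explicit expression (\ref{time-opt.control}) for the corresponding control $\tilde u(t)$ and, simultaneously, the smoothness of the extremal and hence of $\tilde u$ and of the arc itself.

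It then remains only to check that the control (\ref{time-opt.control}) takes values in $\partial U$. This is a one-line computation: summing the squares of the components of (\ref{time-opt.control}) gives $\|\tilde u(t)\|^2=\bigl(h_1^2(\lambda(t))+\hdots+h_k^2(\lambda(t))\bigr)/\bigl(h_1^2(\lambda(t))+\hdots+h_k^2(\lambda(t))\bigr)=1$, so $\tilde u(t)$ lies on the unit sphere $\partial U$ for every $t$ on the arc. Combined with the smoothness already noted, this is exactly the definition of a bang arc, which proves the corollary.

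An equivalent route, if one prefers not to invoke Proposition \ref{776} as a black box, is to read the conclusion directly off the maximality condition of the Pontryagin Maximum Principle: $\tilde u(t)$ maximizes the affine function $u\mapsto h_0(\lambda(t))+\sum_{i=1}^k u_ih_i(\lambda(t))$ over the closed ball $U$, and since $\lambda(t)\notin\Lambda$ means $(h_1(\lambda(t)),\dots,h_k(\lambda(t)))\neq 0$, a nonzero linear functional attains its maximum on $U$ only at the boundary point pointing along its gradient, which is precisely (\ref{time-opt.control}). Either way there is no genuine obstacle here; the one point deserving a word of care is that "out of the singular locus" must hold throughout the arc, so that the denominator in (\ref{time-opt.control}) never vanishes there, and with this the statement follows.
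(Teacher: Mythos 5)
Your proposal is correct and follows exactly the route the paper intends: the corollary is stated as an immediate consequence of Proposition \ref{776}, whose formula (\ref{time-opt.control}) manifestly yields $\|\tilde u(t)\|=1$ and whose smoothness claim gives the smoothness of the arc. The paper offers no separate proof, so your short verification (including the remark that the alternative is just the maximality condition of the Pontryagin Maximum Principle applied to a nonzero linear functional on the ball) matches its reasoning.
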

From Corollary \ref{017} we already have an answer about the regularity of time-optimal trajectories:  every time-optimal trajectory, whose extremal lies out of the singular locus, is smooth.\\
However, we do not know what happen if an extremal touches the singular locus, optimal controls may be not always smooth.
\begin{definition}
A \emph{switching} is a discontinuity of an optimal control.\\
Given $u(t)$ an optimal control, $\bar{t}$ is a \emph{switching time} if $u(t)$ is discontinuous at $\bar{t}$.\\ Moreover given $q_u(t)$ the admissible trajectory, $\bar{q}=q_u(\bar{t})$ is a \emph{switching point }if $\bar{t}$ is a switching time for $u(t)$.
\end{definition}

A concatenation of bang arcs is called \emph{bang-bang trajectory}.

An arc of an optimal trajectory that admits an extremal totally contained in the singular locus $\Lambda$, is called \emph{singular arc}.

\section{Statement of the result}
\label{Statement}
Let us assume that $\dim M=n$ and study the time-optimal problem for the following system
\begin{equation}
\label{111}
\dot{q}=f_0(q)+\sum^k_{i=1}u_if_i(q),\quad q\in M, \,u\in\mathcal{U},
\end{equation}
where $k<n$, $f_0,f_1,\hdots,f_k$ are smooth vector fields, and $U=\{u\in\mathbb{R}^k: ||u||\leq1\}$; we also assume that $f_1,\hdots,f_k$ are linearly independent in the domain under consideration, and $f_{ij}=[f_i,f_j]$ with $i,j\in\{0,1,\hdots,k\}$.
\begin{notation}
\label{246} Recalling Notation \ref{notaz}, let us introduce the following abbreviated notation: $H_{0I}:=H_{0I}(\bar{\lambda}),\
H_{IJ}:=H_{IJ}(\bar{\lambda})$, chosen an opportune $\bar{\lambda}\in\Lambda|_{\bar q}$.
\end{notation}
In order to prove Theorem \ref{codim1thm}, we are going to study extremals for any control system of the form (\ref{111}) with $ k<n$ in a neighbourhood of $\bar{\lambda}\in\Lambda_{\bar{q}}\subseteq T^*_{\bar{q}}M$  such that
\begin{equation}
\label{cond_eqq}
H_{0I}\notin H_{IJ}\,S^{k-1},
\end{equation}
where $S^{k-1}=\{u\in \mathbb{R}^k\,:\, ||u||=1\}$ is the unit sphere.
\begin{remark}
If $k=n-1$, we should choose $\bar{\lambda}=f_1(\bar{q})\wedge\hdots\wedge f_{n-1}(\bar{q})$. One can notice that conditions (\ref{codim1}) and (\ref{cond_eqq}) are equivalent.
\end{remark}
From Corollary \ref{017} we already know that every arc of a time-optimal trajectory, whose extremal lies out of $\Lambda$, is bang, and so smooth. \\
Thus, we are interested to study arcs of a time-optimal trajectories, whose extremals passes through $\Lambda$ or lies in $\Lambda$.\\
The fist step is to investigate if our system admits singular arcs.
\begin{proposition}
\label{809}
Assuming (\ref{cond_eqq}), there are no optimal extremals in $O_{\bar{\lambda}}$ that lie in the singular locus $\Lambda$ for a time interval.
\end{proposition}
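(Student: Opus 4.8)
The plan is to suppose that a singular optimal extremal exists near $\bar\lambda$, differentiate once the $k$ functions $h_1,\dots,h_k$ cutting out $\Lambda$ along it, read off the corresponding control explicitly, and reach a contradiction with (\ref{cond_eqq}) — directly when the control meets $\partial U$, and through the Goh condition when it stays in $\mathrm{int}\,U$. A preliminary observation is that (\ref{cond_eqq}) is an \emph{open} condition in $\lambda$: the set $\{\lambda\in T^*M:\ H_{0I}(\lambda)\in H_{IJ}(\lambda)S^{k-1}\}$ is closed, because if $\lambda_m\to\lambda$ and $u_m\in S^{k-1}$ satisfy $H_{IJ}(\lambda_m)u_m=H_{0I}(\lambda_m)$, then, along a subsequence, $u_m\to u\in S^{k-1}$ and $H_{IJ}(\lambda)u=H_{0I}(\lambda)$. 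Shrinking $O_{\bar\lambda}$, I may therefore assume $H_{0I}(\lambda)\notin H_{IJ}(\lambda)S^{k-1}$ for \emph{every} $\lambda\in O_{\bar\lambda}$, so that the hypothesis is at my disposal at every point of a hypothetical singular arc.

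Now suppose $\lambda(t)\in\Lambda$, $t\in(\tau_1,\tau_2)$, is the restriction of an optimal extremal with $\lambda\big((\tau_1,\tau_2)\big)\subset O_{\bar\lambda}$, corresponding to an admissible control $\tilde u$. Then $h_i(\lambda(t))\equiv0$ for $i=1,\dots,k$, and since $\dot\lambda=\vec{\mathcal H}_{\tilde u}(\lambda)$ with $\mathcal H_{\tilde u(t)}(\lambda)=h_0(\lambda)+\sum_{j=1}^k\tilde u_j(t)h_j(\lambda)$, Remarks \ref{poisson,lie} and \ref{derivPoiss} give, for a.e.\ $t$,
\[
0=\frac{d}{dt}\,h_i(\lambda(t))=\{\mathcal H_{\tilde u},h_i\}(\lambda(t))=h_{0i}(\lambda(t))-\sum_{j=1}^k\tilde u_j(t)\,h_{ij}(\lambda(t)),
\]
that is, $H_{0I}(\lambda(t))=H_{IJ}(\lambda(t))\,\tilde u(t)$ a.e. Since $\|\tilde u(t)\|\le 1$, if $\|\tilde u(t)\|=1$ on a set of positive measure then at a.e.\ such $t$ one would have $H_{0I}(\lambda(t))\in H_{IJ}(\lambda(t))S^{k-1}$ with $\lambda(t)\in O_{\bar\lambda}$, contradicting the first paragraph. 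Hence $\|\tilde u(t)\|<1$ for a.e.\ $t$, and, after modifying $\tilde u$ on a null set (which changes neither $\lambda(\cdot)$ nor $q(\cdot)$), I may take $\tilde u(t)\in\mathrm{int}\,U$ for all $t\in(\tau_1,\tau_2)$.

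At this point the Goh condition (Theorem \ref{Goh.condition}) applies and forces $h_{ij}(\lambda(t))=0$ for all $i,j$ and all $t\in(\tau_1,\tau_2)$, i.e.\ $H_{IJ}(\lambda(t))\equiv0$. Substituting back into $H_{0I}(\lambda(t))=H_{IJ}(\lambda(t))\tilde u(t)$ yields $H_{0I}(\lambda(t))\equiv0$, whence $H_{0I}(\lambda(t))=0\in\{0\}=H_{IJ}(\lambda(t))S^{k-1}$, again contradicting the first paragraph. This contradiction proves that no such singular arc exists in $O_{\bar\lambda}$.

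The step I expect to be the real crux is the passage ``$\tilde u$ stays in the interior of $U$'' $\Rightarrow$ ``$H_{IJ}\equiv0$'': a first-order argument alone does not rule out a singular arc along which $H_{IJ}(\lambda(t))$ is invertible and $\tilde u(t)=H_{IJ}(\lambda(t))^{-1}H_{0I}(\lambda(t))$ lies strictly inside the open ellipsoid $H_{IJ}(\lambda(t))^{-1}(\mathrm{int}\,U)$, so the second-order Goh condition is genuinely needed here (and one should check, or recall, that the extremal produced by Goh can be taken to be the singular one — which, near $\bar q$, is essentially forced because $\Lambda$ is a thin set). The remaining work is routine bookkeeping: using the openness of (\ref{cond_eqq}) so that it is available along the whole arc, and handling the almost-everywhere identities together with the choice of a convenient representative of the control.
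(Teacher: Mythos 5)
Your proof is correct and follows essentially the same route as the paper's: differentiate $h_i(\lambda(t))\equiv 0$ to obtain $H_{0I}(\lambda(t))=H_{IJ}(\lambda(t))\tilde u(t)$, rule out boundary controls directly via the openness of condition (\ref{cond_eqq}), and rule out interior controls via the Goh condition. The only differences are organizational (the paper splits on $H_{0I}\notin H_{IJ}\overline{B^k}$ versus $H_{0I}\in H_{IJ}B^k$ rather than on $\|\tilde u\|$, and leaves the final Goh contradiction implicit), and the subtlety you flag about which extremal the Goh theorem produces is likewise left unaddressed in the paper.
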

Thanks to Proposition \ref{809}, if it holds (\ref{cond_eqq}), the description of optimal extremals in a neighbourhood of $\bar{\lambda}$ is essentially reduced to the study of the solutions of the Hamiltonian system with a discontinuous right-hand side, defined by the Hamiltonian $\mathcal{H}(\lambda)=h_{0}(\lambda)+\sqrt{h^2_{1}(\lambda)+\hdots+h^2_{k}(\lambda)}$.

\begin{theorem}
\label{resultneq}
Assume that condition (\ref{cond_eqq}) is satisfied.\\
If it holds
\begin{equation}
\label{condnotin}
H_{0I}\notin H_{IJ}\overline{B^{k}},
\end{equation}
where $B^k=\{u\in \mathbb{R}^k\,:\,||u||<1\}$, then there exists a neighborhood $O_{\bar\lambda}\subset T^*M$ such that for any $z\in O_{\bar\lambda}$ and $\hat{t}>0$ there exists a unique contained in $O_{\bar\lambda}$ extremal $t\mapsto\lambda(t,z)$ with the condition $\lambda(\hat{t},z)=z$. Moreover, $\lambda(t,z)$ continuously depends on $(t,z)$ and every extremal in $O_{\bar{\lambda}}$ that passes through the singular locus is piece-wise smooth with only one switching.\\
Besides that, if $u$ is the control corresponding to the extremal that passes through $\bar{\lambda}$, and $\bar{t}$ is its switching time, we have:
\begin{equation}
\label{jumpi}
u(\bar{t}\pm 0)=[\pm d\,\mathrm{Id}+H_{IJ}]^{-1}H_{0I},
\end{equation}
with $d\,>0$ unique, uni vocally defined by the system and $\bar{\lambda}$, such that
\begin{equation}
\label{iii}\left\langle [d^2\, \mathrm{Id}-H_{IJ}^2]^{-1}H_{0I},H_{0I} \right\rangle=1 .
\end{equation}
If it holds
\begin{equation}
\label{condin}
H_{0I}\in H_{IJ}B^{k},
\end{equation}
then there exists a neighbourhood $O_{\bar\lambda}\subset T^*M$ such that no one optimal extremal intersects singular locus in $O_{\bar\lambda}$.
\end{theorem}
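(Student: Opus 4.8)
\medskip

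The plan is to resolve the non‑Lipschitz singularity of the maximized Hamiltonian $\mathcal H=h_0+\sqrt{h_1^2+\dots+h_k^2}$ along $\Lambda$ by a polar blow‑up, and then to read the local picture of extremals off the partially hyperbolic dynamics of the resulting smooth flow. We may work under the standing assumption \eqref{cond_eqq}; since \eqref{condnotin} implies \eqref{cond_eqq}, no extra hypothesis enters in the first case. Because $f_1,\dots,f_k$ are independent, $dh_1,\dots,dh_k$ are independent at $\bar\lambda$, so $(h_1,\dots,h_k)$ completes to a chart on $T^*M$ in which $\Lambda=\{h_1=\dots=h_k=0\}$; denote by $y$ the remaining $2n-k$ coordinates. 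Introduce polar coordinates $h=\rho u$, $\rho\ge0$, $u\in S^{k-1}$, on the factor transverse to $\Lambda$, getting the blown‑up space with exceptional divisor $E=\{\rho=0\}$, an $S^{k-1}$‑bundle over $\Lambda$. From $\dot h_i=\{\mathcal H,h_i\}=h_{0i}-\tfrac1\rho(H_{IJ}h)_i$ and $\sum u_i^2=1$ one finds $\dot\rho=\langle H_{0I},u\rangle$ and $\dot u=\tfrac1\rho\bigl[(I-uu^\top)H_{0I}-H_{IJ}u\bigr]$, so $\vec{\mathcal H}$ has at worst a $1/\rho$ singularity and the reparametrized field $\vec W:=\rho\,\vec{\mathcal H}$ (time change $ds=dt/\rho$) extends smoothly across $E$, with $\vec W|_E$ given by $\dot y=0$, $\dot u=V(u,y):=(I-uu^\top)H_{0I}(\lambda)-H_{IJ}(\lambda)u$ and with $\dot\rho=\rho\langle H_{0I}(\lambda),u\rangle$ near $E$. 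Off $\Lambda$ the extremals are smooth trajectories of $\vec{\mathcal H}$ by Proposition~\ref{776}, and by Proposition~\ref{809} there are no singular arcs; so everything reduces to describing how trajectories of $\vec W$ meet $E$.

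The second step is the phase portrait of $V(\cdot,\bar y)$ on $S^{k-1}$. An equilibrium $u^*$ satisfies $H_{0I}=(H_{IJ}+cI)u^*$ with $c:=\langle H_{0I},u^*\rangle$, and for $c\ne0$ this forces $1=|u^*|^2=\langle(c^2I-H_{IJ}^2)^{-1}H_{0I},H_{0I}\rangle=:g(c)$, where $g$ is even, strictly decreasing in $|c|$ on $(0,\infty)$ and $g(+\infty)=0$. Under \eqref{cond_eqq} there is no equilibrium with $c=0$ (that would be exactly $H_{0I}\in H_{IJ}S^{k-1}$). Under \eqref{condnotin} one has $g(0^+)>1$ ($=+\infty$ when $H_{0I}\notin\operatorname{im}H_{IJ}$), so $g(c)=1$ has a unique positive root $d$, giving exactly the two equilibria $u_\pm=(H_{IJ}\pm dI)^{-1}H_{0I}$ with $\langle H_{0I},u_\pm\rangle=\pm d$ — this is precisely \eqref{jumpi}--\eqref{iii}. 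Linearizing, on $T_{u^*}S^{k-1}$ one gets $DV(u^*)=-cI-(\text{skew operator})$, so the eigenvalues at $u_\pm$ have real part $\mp d$: $u_+$ is a sink and $u_-$ a source of $V$. Under \eqref{condin} instead $g(0^+)<1$, so $g(c)=1$ has no root and $V$ has no equilibrium. All of this persists for $y$ near $\bar y$.

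Now the geometry. In case \eqref{condnotin}, $M_\pm:=\{\rho=0,\ u=u_\pm(y)\}$ are invariant manifolds of $\vec W$, diffeomorphic to $\Lambda$, and normally hyperbolic: at $M_+$ the normal $\rho$‑direction has rate $+d$ and the $k-1$ sphere directions rates of real part $-d$, while at $M_-$ it is the reverse, the tangent ($y$) directions being genuine centre directions since $\dot y\equiv0$ on $E$. Hence $W^u(M_+)$ and $W^s(M_-)$ are smooth submanifolds of dimension $\dim\Lambda+1$ depending smoothly on the data. The key dynamical lemma is that an extremal meets $\Lambda$ only along these: if an extremal reaches $\Lambda$ at $\bar t$, then for $t\to\bar t^\mp$ one has $\rho\to0$, and since $\dot u=V(u)/\rho$ would otherwise force infinite variation of the control (as $\rho$ decays at most linearly, $\int^{\bar t}dt/\rho=+\infty$), the control $u(t)$ must converge to an equilibrium of $V$; the admissible sign of $\langle H_{0I},u\rangle$ (nonpositive on the incoming side, nonnegative on the outgoing side) forces the limits to be $u_-$ and $u_+$ respectively. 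The incoming arc is then the trajectory of $\vec W$ in $W^s(M_-)$ asymptotic to $(\rho=0,u_-,\bar y)$ as $s\to+\infty$ (reaching $\Lambda$ in finite $t$‑time), the outgoing arc the unique trajectory in $W^u(M_+)$ emanating from $(\rho=0,u_+,\bar y)$ as $s\to-\infty$, and the two are joined by a heteroclinic of $V(\cdot,\bar y)$ inside the fibre $E_{\bar y}$, which collapses to the single point $\bar\lambda$ under the blow‑down. This yields at once: exactly one switching, the jump formula \eqref{jumpi}, and uniqueness of the extremal through any $z$ (ordinary ODE uniqueness off $\Lambda$, unique stable/unstable fibres at $\Lambda$); continuous dependence on $(t,z)$ follows from the $C^1$ dependence of $W^s(M_-),W^u(M_+)$ on base points together with continuous dependence off $\Lambda$. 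In case \eqref{condin}, $V$ has no equilibrium, so the same lemma shows no extremal can reach $\Lambda$ in a sufficiently small $O_{\bar\lambda}$ — the last assertion.

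I expect the main obstacle to be precisely this dynamical lemma under \eqref{condnotin}: controlling the merely $L^\infty$ control $u(t)$ as $\rho\to0$ and showing that the only subsets of the exceptional divisor that can attract, resp.\ repel, extremals accumulating on $\Lambda$ are $M_-$, resp.\ $M_+$ — in particular ruling out spiralling of the control towards some other recurrent set of $V$ on $S^{k-1}$, i.e.\ proving that near $E$ the reparametrized flow is genuinely governed by the partially hyperbolic equilibria $M_\pm$. Secondary technical points are verifying that $\vec W$ is smooth (not merely bounded) across the boundary $E$, so that Fenichel‑type invariant‑manifold theory applies there, and the uniqueness of the forward continuation of an extremal once it has touched $\Lambda$.
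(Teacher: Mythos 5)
Your overall route coincides with the paper's: polar blow-up along the singular locus, time reparametrization $ds=dt/\rho$, identification of the equilibria $u_\pm$ on $S^{k-1}$ via the even, monotone function $c\mapsto\left\langle (c^2\,\mathrm{Id}-H_{IJ}^2)^{-1}H_{0I},H_{0I}\right\rangle$, and the partially hyperbolic structure at $\bar\lambda_{u_\pm}$ (the paper invokes Shoshitaishvili's reduction rather than Fenichel-type theory, but the eigenvalue computation, including the fact that the real parts of the spectrum of $\partial_u V$ at $u_\pm$ equal $\mp d$, is the same). However, the step you yourself flag as ``the main obstacle'' is a genuine gap, and it is exactly the step the paper must, and does, supply as a separate statement (Proposition \ref{7771}): every non-equilibrium solution of $u'=H_{0I}-\left\langle H_{0I},u\right\rangle u-H_{IJ}u$ on $S^{k-1}$ tends to $u_+$ as $s\to+\infty$ and to $u_-$ as $s\to-\infty$. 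Your inference from ``$\int dt/\rho=+\infty$'' to ``$u(t)$ converges to an equilibrium of $V$'' is a non sequitur: for a general field on a sphere of dimension $\geq 2$ the $\omega$-limit set of the $u$-component need only be a compact invariant set, which could a priori be a periodic orbit or another minimal set, in which case the control would have no one-sided limit at $\bar t$ and both \eqref{jumpi} and the ``only one switching'' claim would fail. The paper excludes this globally by lifting $u(s)$ to the linear system $\dot x=\langle H_{0I},y\rangle$, $\dot y=xH_{0I}-H_{IJ}y$ on $\mathbb R^{k+1}$, which preserves the Lorentz form $Q=x^2-|y|^2$ and hence the cone $C$; the invariant complement of $\mathrm{span}\{(1,u_+),(1,u_-)\}$ meets $C$ only at the origin, so $Q$ is sign-definite there, all solutions in it are bounded, and every nonzero solution on $C$ is asymptotic to the rays through $(1,u_\pm)$. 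Without this (or an equivalent global) argument your proof is incomplete.

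A second, related problem is that you reduce the case \eqref{condin} to the same unproven lemma (``$V$ has no equilibrium, hence no extremal reaches $\Lambda$''). A nonvanishing field on $S^{k-1}$ (possible exactly when $k$ is even, the only case in which \eqref{condin} can occur) still has nonempty compact invariant sets, so this reduction is not sound as stated. The paper instead treats \eqref{condin} by a direct quantitative estimate (Lemma \ref{lemma1}): differentiating $\rho(t)\,\|v(\lambda(t))\|$ along the extremal and using $\|v\|\geq c_2>0$ on a compact neighbourhood gives $\rho(t)\geq c\,e^{-\alpha t}\rho(0)$, so $\rho$ cannot vanish in finite time. Finally, you should make explicit the finite-$t$-time passage through $\Lambda$ (the paper's estimate $\rho(s)\leq\rho(s_0)e^{c_1(s-s_0)}$ with $c_1<0$, whence $\Delta t=\int\rho\,ds<\infty$) and the uniform smallness of the $t$-time spent near $\Lambda$ required for continuity of $(t,z)\mapsto\lambda(t,z)$; both are short but do not follow from invariant-manifold theory alone.
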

Note that $H_{IJ}\overline{B^{k}}=H_{IJ}S^{k-1}$ if the matrix $H_{IJ}$ is degenerate, and that this matrix is always degenerate for odd $k$. Hence, assuming (\ref{cond_eqq}), we have the following possibilities:\\
\begin{center}
\begin{tabular}{l}
\framebox[10cm]{\begin{minipage}{90mm}
It holds (\ref{condnotin}) if it is verified one of the following scenarios:
\begin{itemize}
\item[$(A)$]$k$ is odd
\item[$(B)$]$k$ is even and $H_{IJ}$ is degenerate
\item[$(C')$]$k$ is even, $H_{IJ}$ is non-degenerate and $H_{0I}\notin H_{IJ}\overline{B^{k}}$ .
\end{itemize}
\end{minipage}} \\ \\
\framebox[10cm]{\begin{minipage}{9cm}
It holds (\ref{condin}) if it is verified the following scenario:
\begin{itemize}
\item[$(C'')$]$k$ is even, $H_{IJ}$ is non-degenerate and $H_{0I}\in H_{IJ}B^{k}$ .
\end{itemize}
\end{minipage}}
\end{tabular}
\end{center}

\begin{remark}
 In general, the flow of switching extremals from Theorem \ref{resultneq} is not locally Lipschitz with respect to the initial value. In \cite{AB} was found a simple counterexample that can be easily generalized to any $k<n$.
\end{remark}
Since the Pontryagin Maximum Principle is a necessary but not sufficient condition of optimality, even if we have found extremals that passes through the singular locus, we cannot guaranty that they are all optimal, namely that their projections in $M$ are time-optimal trajectory. In some cases they are certainly optimal, in particular, for linear system with an equilibrium target, where to be an extremal is sufficient for optimality. We plan to study general case in a forthcoming paper.\\

\section{Proof}
\label{Proof}
In this Section we are going to present at first the proof of Theorem \ref{resultneq}, secondly we are going to prove Proposition \ref{809}. All together, these statements contain Theorem \ref{codim1thm}.
\subsection{Proof of Theorem \ref{resultneq}} Let us present the Blow-up technique, in order to analyse the discontinuous right-hand side Hamiltonian system, defined by
\begin{equation}
\label{111098}
\mathcal{H}(\lambda)=h_0(\lambda) +  \sqrt{h_1^2(\lambda)+\dots+h_{k}^2(\lambda)},
\end{equation}in a neighbourhood $O_{\bar{\lambda}}$ of $\bar{\lambda}$.
\subsubsection{Blow-up technique}In view of the fact that this is a local problem in $O_{\bar{\lambda}}\subseteq T^*M$, it is very natural consider directly its local coordinates $(\xi,x)\in \mathbb{R}^{n*}\times \mathbb{R}^n$, such that $\bar{\lambda}$ corresponds to $(\bar{\xi}, \bar{x})$ with $\bar{x}=0$. Hence,
\begin{equation}
\label{1110988}
\mathcal{H}(\xi,x)=h_0(\xi,x) +  \sqrt{h_1^2(\xi,x)+\hdots+h_k^2(\xi,x)}.
\end{equation}
Since $f_1, \dots, f_k$ are linearly independent everywhere, we can define $n-k$ never null vector fields $f_{k+1},\dots,f_n$, such that $\{f_1,\hdots,f_n\}$ form a basis at any $q\in M$, then we will have the corresponding $h_j(\xi,x)=\left\langle \xi,f_j(x) \right\rangle $, with $j=k+1,\hdots, n$. Therefore, we are allowed to consider the following smooth change of variables
$$
\Phi:\, (\xi,x)\longrightarrow  ((h_1,\hdots,h_{n}),x),
$$
so the singular locus becomes the subspace $$\Lambda=\{((h_1,\hdots,h_{n}),x) \, :\, h_1=\hdots=h_k=0 \}.$$
\begin{notation}In order not to do notations even more complicated, we call $\lambda$ any point defined with respect to the new coordinates $((h_1,\hdots,h_{n}),x)$, and $\bar{\lambda}$ what corresponds to the singular point.
\end{notation}
Thus, let us define the blow-up technique.
\begin{definition}
The \emph{blow-up} technique is defined in the following way:\\
We make a change of variables: $(h_1,\hdots, h_k)=(\rho u_1,\hdots,\rho u_k)$ with $\rho\in\mathbb{R}^+$ and $(u_1,\hdots,u_k)\in S^{k-1}$. Instead of considering the components $h_1,\hdots,h_k$ of the singular point $\bar{\lambda}$ in $\Lambda$, as the point $(0,\hdots,0)$ in the k-dimensional euclidean space, we will consider it as a sphere $S^{k-1}$, where $\{\rho=0\}$.
\end{definition}
\begin{center}
\begin{pspicture}(0,-1.4812988)(13.006666,1.4812988)
\definecolor{colour0}{rgb}{0.8,0.8,0.8}
\rput[bl](3.36,-1.2653614){$\textcolor{black}{\bar{\lambda}}$}
\rput[bl](5.8133335,0.09463871){\large}
\rput[bl](5.8133335,0.09463871){Blow - up}
\rput[bl](7.6,-1.4053613){$\textcolor{black}{\bar{\lambda}}$}
\rput[bl](7.846667,0.57463866){$\textcolor{black}{\bar{\lambda}_{u}}$}
\psdots[linecolor=black, dotsize=0.18](3.7666667,-0.25869465)
\psline[linecolor=black, linewidth=0.04, linestyle=dashed, dash=0.17638889cm 0.10583334cm, arrowsize=0.05291667cm 2.0,arrowlength=1.4,arrowinset=0.0]{->}(5.32,-0.25869465)(7.68,-0.25869465)
\psdots[linecolor=black, dotsize=0.18](8.78,-0.005361328)
\psline[linecolor=black, linewidth=0.04, linestyle=dotted, dotsep=0.10583334cm, arrowsize=0.05291667cm 2.0,arrowlength=1.4,arrowinset=0.0]{->}(3.5133333,-0.8453613)(3.6866667,-0.43202797)
\psline[linecolor=black, linewidth=0.04, linestyle=dotted, dotsep=0.10583334cm, arrowsize=0.05291667cm 2.0,arrowlength=1.4,arrowinset=0.0]{->}(8.14,-1.0453613)(8.46,-0.6586947)
\psline[linecolor=black, linewidth=0.04, linestyle=dotted, dotsep=0.10583334cm, arrowsize=0.05291667cm 2.0,arrowlength=1.4,arrowinset=0.0]{->}(8.06,0.5546387)(8.566667,0.18130533)
\rput[bl](0.0,1.2146386){\Large}
\rput[bl](0.0,1.2146386){$\textcolor{black}{((h_1,\hdots,h_k,h_{k+1},\hdots,h_n), x)}$}
\psline[linecolor=black, linewidth=0.037, arrowsize=0.05291667cm 2.0,arrowlength=1.4,arrowinset=0.0]{->}(5.32,1.4146386)(7.68,1.4146386)
\rput[bl](7.846667,1.2146386){\Large}
\rput[bl](7.846667,1.2146386){$\textcolor{black}{((\rho,u_1,\hdots,u_k, h_{k+1},\hdots, h_n),x)}$}
\pscircle[linecolor=black, linewidth=0.04, fillstyle=solid,fillcolor=colour0, dimen=outer](8.85,-0.11536138){0.41}
\psdots[linecolor=black, dotsize=0.16](8.78,0.05463862)
\end{pspicture}

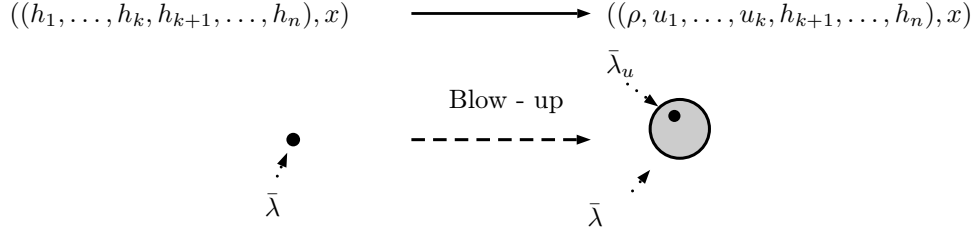
\captionof{figure}{Blow-up technique}
\end{center}
\bigskip
Let us notice that it is good to denote $u:=(u_1,\hdots,u_k)$ the $S^{k-1}$-coordinates. As it is already know from Proposition \ref{776}, every optimal control $\tilde{u}$, that  corresponds to an extremal $\lambda(t)$ out of $\Lambda$, satisfies formula (\ref{time-opt.control}): therefore $\tilde{u}$ lies on $\partial U=S^{k-1}$, and it is the normalization of the vector $(h_1(\lambda(t)),\hdots,h_{k}(\lambda(t)))$.\\
It is useful denote
$$ f_{u}(x)=u_1 f_1(x)+\hdots+u_k f_k(x) $$
and $h_{u}(\lambda)=\left\langle  \xi, f_{u}(x) \right\rangle $; and finally we can see that $$h_{u}(\lambda)=\sqrt{h^2_1+\hdots+h^2_k},$$ namely $h_{u}(\lambda)=\rho$, because $h_{u}(\lambda)=u_1h_1+\hdots+u_k h_k$, and $u_i=\frac{h_i}{\sqrt{h^2_1+\hdots+h^2_k}}$ for all $i\in\{1,\hdots,k\}$.\\
Hence, with this new formulation the maximized Hamiltonian becomes
\begin{equation}
\label{111099888}
\mathcal{H}(\lambda)=h_0(\lambda)+  h_{u}(\lambda).
\end{equation}
Thanks to Notation \ref{notaz}, Remarks \ref{derivPoiss} and \ref{poisson,lie}, the Hamiltonian system has the following form:
\begin{equation}
\label{11100}
\left\lbrace \begin{array}{l}
\dot{x}=f_0(x)+f_{u}(x)\\
\dot{\rho} = \left\langle H_{0I}(\lambda),u \right\rangle  \\
\dot{u} = \frac{1}{\rho}\left( H_{0I}(\lambda)-\left\langle H_{0I}(\lambda),u \right\rangle u-H_{IJ}(\lambda)u \right)\\
\dot{h}_{j}=h_{0j}(\lambda)+h_{u j}(\lambda), \quad j\in\{k+1,\hdots,n\}.
\end{array}\right.
\end{equation}
\begin{claim}
\label{349}
If assumption (\ref{condnotin}) is satisfied at the singular point $\bar{\lambda}$, then in $S^{k-1}$
\begin{equation}
\label{gg}
u\longmapsto H_{0I}-\left\langle H_{0I},u \right\rangle u-H_{IJ}\,u
\end{equation}
has two zeros $u_+$ and $u_-$ defined in the following way:
\begin{equation}
\label{coc}
u_\pm=[\pm d\, \mathrm{Id}+H_{IJ}]^{-1}H_{0I},
\end{equation}
with $d>0$ such that
\begin{equation}
\label{cocc}
\left\langle [d^2\, \mathrm{Id}-H_{IJ}^2]^{-1}H_{0I},H_{0I} \right\rangle=1 .
\end{equation}
The function (\ref{gg}) has no zero if it holds assumption (\ref{condin}).
\end{claim}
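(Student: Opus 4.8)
The plan is to turn the vector equation $g(u)=0$ on $S^{k-1}$, where $g(u)=H_{0I}-\langle H_{0I},u\rangle u-H_{IJ}u$, into a single scalar equation in an auxiliary parameter, exploiting that $H_{IJ}$ is skew-symmetric, $H_{IJ}^{\top}=-H_{IJ}$. First I would note that if $u\in S^{k-1}$ and $g(u)=0$, then, setting $c:=\langle H_{0I},u\rangle$, one has $H_{0I}=(c\,\mathrm{Id}+H_{IJ})u$. If $c=0$ this reads $H_{0I}=H_{IJ}u\in H_{IJ}S^{k-1}\subset H_{IJ}\overline{B^{k}}$, contradicting (\ref{condnotin}); hence $c\neq0$. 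Since the eigenvalues of $H_{IJ}$ are purely imaginary, $c\,\mathrm{Id}+H_{IJ}$ is then invertible, so $u=(c\,\mathrm{Id}+H_{IJ})^{-1}H_{0I}$ is determined by $c$ alone.

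Next I would convert the two remaining constraints, $\|u\|=1$ and $\langle H_{0I},u\rangle=c$, into conditions on $c$. Using $(c\,\mathrm{Id}+H_{IJ})^{\top}=c\,\mathrm{Id}-H_{IJ}$ and the commuting factorisation $(c\,\mathrm{Id}+H_{IJ})(c\,\mathrm{Id}-H_{IJ})=c^{2}\,\mathrm{Id}-H_{IJ}^{2}$, the normalisation becomes exactly
\[
\|u\|^{2}=\langle (c^{2}\,\mathrm{Id}-H_{IJ}^{2})^{-1}H_{0I},H_{0I}\rangle=1 ,
\]
which is (\ref{cocc}) with $d^{2}=c^{2}$. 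For the consistency condition I would use the elementary identity
\[
(c\,\mathrm{Id}+H_{IJ})^{-1}+(c\,\mathrm{Id}-H_{IJ})^{-1}=2c\,(c^{2}\,\mathrm{Id}-H_{IJ}^{2})^{-1},
\]
together with symmetry of the inner product and $H_{IJ}^{\top}=-H_{IJ}$, to get $\langle H_{0I},(c\,\mathrm{Id}+H_{IJ})^{-1}H_{0I}\rangle=c\,\langle (c^{2}\,\mathrm{Id}-H_{IJ}^{2})^{-1}H_{0I},H_{0I}\rangle$; hence $\langle H_{0I},u\rangle=c$ is automatic as soon as the displayed normalisation holds. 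Thus zeros of $g$ on $S^{k-1}$ correspond bijectively to real solutions $c\neq0$ of $\psi(c^{2})=1$, where $\psi(t):=\langle (t\,\mathrm{Id}-H_{IJ}^{2})^{-1}H_{0I},H_{0I}\rangle$, each such $c$ producing $u=(c\,\mathrm{Id}+H_{IJ})^{-1}H_{0I}$.

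It then remains to analyse $\psi$. Since $H_{IJ}^{2}=-H_{IJ}^{\top}H_{IJ}$ is symmetric and negative semidefinite, in an orthonormal eigenbasis with $H_{IJ}^{2}=\mathrm{diag}(-\mu_{1}^{2},\dots,-\mu_{k}^{2})$ and $H_{0I}=(v_{1},\dots,v_{k})$ we get $\psi(t)=\sum_{j}v_{j}^{2}/(t+\mu_{j}^{2})$, which is defined and positive-definite for $t>0$, continuous and strictly decreasing on $(0,\infty)$, and tends to $0$ as $t\to+\infty$. Under (\ref{condnotin}) I would show $\psi(0^{+})>1$: if $H_{0I}\notin\mathrm{range}\,H_{IJ}$ then some $v_{j}\neq0$ with $\mu_{j}=0$, so $\psi(0^{+})=+\infty$; otherwise $\psi(0^{+})$ is the squared norm of the minimal-norm $w$ with $H_{IJ}w=H_{0I}$, and that norm is $>1$ (else $H_{0I}\in H_{IJ}\overline{B^{k}}$), in particular $\psi(0^{+})=\|H_{IJ}^{-1}H_{0I}\|^{2}>1$ when $H_{IJ}$ is nondegenerate. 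By the intermediate value theorem there is then a unique $t^{*}>0$ with $\psi(t^{*})=1$; putting $d:=\sqrt{t^{*}}>0$, the only admissible values are $c=\pm d$, so the zeros are $u_{\pm}=[\pm d\,\mathrm{Id}+H_{IJ}]^{-1}H_{0I}$, and they are distinct (from $u_{+}=u_{-}$ one deduces $2d\,u_{+}=0$). Conversely each $u_{\pm}$ is a genuine zero: by construction $(c\,\mathrm{Id}+H_{IJ})u_{\pm}=H_{0I}$ and $\langle H_{0I},u_{\pm}\rangle=c\,\psi(d^{2})=c=\pm d$, while $\|u_{\pm}\|^{2}=\psi(d^{2})=1$; this also gives uniqueness of $d$ via (\ref{cocc}), proving the first assertion and (\ref{coc}). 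Finally, under (\ref{condin}) the matrix $H_{IJ}$ is nondegenerate with $\|H_{IJ}^{-1}H_{0I}\|<1$, so $\psi(0^{+})=\|H_{IJ}^{-1}H_{0I}\|^{2}<1$; by monotonicity $\psi(t)<1$ for all $t>0$, hence there is no admissible $c\neq0$, and $c=0$ is excluded as before — so $g$ has no zero.

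The main obstacle I anticipate is not a single hard estimate but organising the two genuinely different regimes for $H_{IJ}$ — necessarily degenerate for odd $k$, possibly nondegenerate for even $k$ — in the behaviour of $\psi$ near $t=0$, and cleanly disposing of the spurious case $c=0$; the operator identity above is exactly what collapses the problem to a one-variable strictly monotone equation and makes the rest routine.
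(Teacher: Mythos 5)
Your proof is correct and follows essentially the same route as the paper: both reduce the vector equation on $S^{k-1}$ to the scalar condition $\|(c\,\mathrm{Id}+H_{IJ})^{-1}H_{0I}\|^2=\langle(c^2\,\mathrm{Id}-H_{IJ}^2)^{-1}H_{0I},H_{0I}\rangle=1$ and settle existence or non-existence of solutions by monotonicity in $c^2$ together with the limits as $c^2\to 0^+$ and $c^2\to+\infty$, split according to whether $H_{0I}$ lies in the range of $H_{IJ}$. Your explicit check that $\langle H_{0I},u\rangle=c$ is automatic once the normalisation holds is a small point the paper leaves implicit; the only nit is that excluding $c=0$ under (\ref{condin}) should invoke the standing assumption (\ref{cond_eqq}) rather than ``as before,'' since the earlier exclusion used (\ref{condnotin}).
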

\begin{proof}
Denoting $Z:=\left\langle  H_{0I},u\right\rangle $, we are looking for $u\in S^{k-1}$ and $Z\in\mathbb{R}$ such that
$$H_{0I}=(Z \,\mathrm{Id}+H_{IJ})u.
$$
We already know that, if $Z=0$, then there is no $u\in S^{k-1}$ such that $H_{0I}=H_{IJ}u$, by assumption (\ref{cond_eqq}). Moreover, since $H_{IJ}$ is a skew-symmetric matrix, if $Z\neq 0$ then $(Z \,\mathrm{Id} +H_{IJ})$ is invertible, and
$$u=(Z\,\mathrm{Id}+H_{IJ})^{-1}H_{0I}.
$$
Let us consider the function
\begin{equation}
\label{newfunct}Z\longmapsto || (Z\,\mathrm{Id}+H_{IJ})^{-1}H_{0I} ||^2
\end{equation}
that will be continuous even and monotone in the domains $(-\infty,0)$ and $(0,+\infty)$, because $$|| (Z\,\mathrm{Id}+H_{IJ})^{-1}H_{0I} ||^2=\left\langle [Z^2\, \mathrm{Id}-H_{IJ}^2]^{-1}H_{0I},H_{0I} \right\rangle,$$ 
and its derivation with respect to $Z^2$ is negative
$$\frac{\mathrm{d}}{\mathrm{d}(Z^2)}\left\langle [Z^2\, \mathrm{Id}-H_{IJ}^2]^{-1}H_{0I},H_{0I} \right\rangle<0.$$ 
Indeed, it holds
$$\begin{array}{rcl}
\frac{\mathrm{d}}{\mathrm{d}(Z^2)}\left\langle [Z^2\, \mathrm{Id}-H_{IJ}^2]^{-1}H_{0I},H_{0I} \right\rangle&=&-\left\langle [Z^2\, \mathrm{Id}-H_{IJ}^2]^{-2}H_{0I},H_{0I} \right\rangle\\
&&\\
&=&-|| [Z^2 \mathrm{Id}- H_{IJ}^2]^{-1}H_{0I} ||^2.
\end{array}
$$
We are going to verify if and in which cases the function (\ref{newfunct}) takes value 1 two or zero times. Thus, let us compute the limits of $|| (Z\,\mathrm{Id}+H_{IJ})^{-1}H_{0I} ||^2$ as $Z\rightarrow \pm \infty$ or $Z\rightarrow 0^\pm$.

At first one can observe that, 
$$\lim_{Z\rightarrow \pm \infty}|| (Z\,\mathrm{Id}+H_{IJ})^{-1}H_{0I} ||^2= 0^+.$$

In order to compute $\lim_{Z\rightarrow 0^\pm}|| (Z\,\mathrm{Id}+H_{IJ})^{-1}H_{0I} ||^2$, let us assume that $H_{IJ}$ is in the canonical Jordan form, without loss of generality: it is defined by $j\leq \lfloor\frac{n}{2}\rfloor$ $2\times 2$ skew symmetric  blocks with the following form
$$J_i=\left(\begin{array}{cc}
0&a_i\\
-a_i&0
\end{array}\right)\quad i\in\{1,\hdots,j\},
$$ and the rest of the matrix is null.\\

Let $H_{IJ}$ be a degenerate matrix. If $H_{0I}$ does not belong to its image, namely $H_{0I}\notin H_{IJ}\mathbb{R}^k$, it holds
$$\lim_{Z\rightarrow 0^\pm}|| (Z\,\mathrm{Id}+H_{IJ})^{-1}H_{0I} ||^2= +\infty.$$ 
On the other hand, let us show that if $H_{0I}\in H_{IJ}\mathbb{R}^k$ the  limit $\lim_{Z\rightarrow 0^\pm}|| (Z\,\mathrm{Id}+H_{IJ})^{-1}H_{0I} ||^2$ is finite strictly grater that 1.\\
Since $H_{IJ}$ is degenerate, it holds $H_{IJ}\overline{B}^k=H_{IJ}S^{k-1}$, then by condition (\ref{cond_eqq}) we have $H_{0I}\notin H_{IJ}\overline{B}^k$. Thus, given condition $H_{0I}\in H_{IJ}\mathbb{R}^k$ we have that for all $X$, such that $H_{0I}= H_{IJ}\,X$, it has norm strictly grater than $1$.\\
Finally, let us define 
$$X=\left(
\begin{array}{cccccc}
J_1^{-1}&&\\
&\ddots&&\\
&&J_j^{-1}&\\
&&&0_{(n-2j)\times(n-2j)}
\end{array}
\right)H_{0I},
$$
and see, by construction, that 
$$\lim_{Z\rightarrow 0^\pm}|| (Z\,\mathrm{Id}+H_{IJ})^{-1}H_{0I} ||^2= ||X||^2>1.$$

Hence, if $H_{IJ}$ is degenerate, by monotonicity and continuity of (\ref{newfunct}),  there will be a value $Z=d>0$ such that $$|| (\pm d\,\mathrm{Id}+H_{IJ})^{-1}H_{0I} ||^2=1.$$ It means that there exist $u_+$ and $u_-$ zeros of the function (\ref{gg}) such that $|\left\langle  H_{0I},u_\pm\right\rangle |=d$. We will assume $\left\langle  H_{0I},u_+\right\rangle >0$ and $\left\langle  H_{0I},u_-\right\rangle <0$.\\
These facts happen in scenarios (A) and (B) of condition (\ref{condnotin}).\\

If $H_{IJ}$ is a non-degenerate matrix, then (\ref{newfunct}) is a continuous function for all $Z\in \mathbb{R}$ and
$$\lim_{Z\rightarrow 0}|| (Z\,\mathrm{Id}+H_{IJ})^{-1}H_{0I} ||^2= || H_{IJ}^{-1}H_{0I} ||^2.$$
Thus, in this case the function (\ref{gg}) will have two or no zeros if and only if $|| H_{IJ}^{-1}H_{0I} ||>1$ or $|| H_{IJ}^{-1}H_{0I} ||<1$, namely $H_{0I}\notin H_{IJ}\overline{B^k}$ or $H_{0I}\in H_{IJ}B^k$. \\
These are, indeed, scenarios $(C')$ and $(C'')$.
\end{proof}
\bigskip
\subsubsection{Case $H_{0I}\in H_{IJ}B^k$}\textcolor{white}{.}\\
Once we have seen that (\ref{gg}) have no zero in this case, let us present the following Lemma in order to prove Theorem \ref{resultneq} if $H_{0I}\in H_{IJ}B^k$.
\begin{lemma}
\label{lemma1}
Let us assume  (\ref{cond_eqq}), (\ref{condin}) and give a neighbourhood $O_{\bar{\lambda}}$ small enough such that
$$H_{0I}(\lambda)-\left\langle H_{0I}(\lambda),u \right\rangle u-H_{IJ}(\lambda)u\neq 0,\quad \forall \lambda_u\in \overline{O_{\bar{\lambda}}}.$$
Then there exist two constants $c>0$ and $\alpha>0$ such that every optimal extremal that lies for a time interval $I\subseteq [0,+\infty)$ in $O_{\bar{\lambda}}$ satisfies the following inequality: $\rho(t)\geq c e^{-\alpha (t)}\rho(0)$, for $t\in I$.
\end{lemma}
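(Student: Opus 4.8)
The plan is to construct an \emph{approximate first integral} of the system (\ref{11100}) that is comparable to $\rho$ and whose rate of decay is controlled by its own size, so that an elementary Gr\"onwall argument finishes the job. Since (\ref{condin}) puts us in the scenario ``$k$ even, $H_{IJ}$ non-degenerate'', after shrinking $O_{\bar\lambda}$ the matrix $H_{IJ}(\lambda)$ is invertible on $\overline{O_{\bar\lambda}}$, so I set
\[
v(\lambda):=H_{IJ}(\lambda)^{-1}H_{0I}(\lambda),
\]
a smooth vector field on $\overline{O_{\bar\lambda}}$; condition (\ref{condin}) means exactly that $|v(\bar\lambda)|<1$, so, shrinking once more, $|v(\lambda)|\le\mu<1$ there. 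The quantity I would track along an extremal is $W(t):=\rho(t)\bigl(1-\langle u(t),v(\lambda(t))\rangle\bigr)$, which obeys $(1-\mu)\rho\le W\le(1+\mu)\rho$ because $1-\langle u,v(\lambda)\rangle\in[1-\mu,1+\mu]$. Thus it suffices to establish a differential inequality of the form $\dot W\ge-\alpha W$.

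First I would differentiate $W$ along (\ref{11100}), using $\dot\rho=\langle H_{0I}(\lambda),u\rangle$ and $\rho\dot u=H_{0I}(\lambda)-\langle H_{0I}(\lambda),u\rangle u-H_{IJ}(\lambda)u$. This produces three groups of terms, $\dot\rho\,(1-\langle u,v\rangle)$, $-\rho\langle\dot u,v\rangle$ and $-\rho\langle u,\tfrac{d}{dt}v(\lambda(t))\rangle$, and I expect the first two to cancel \emph{identically}: $H_{IJ}(\lambda)$ is skew-symmetric and $H_{IJ}(\lambda)v(\lambda)=H_{0I}(\lambda)$, which give $\langle H_{0I}(\lambda),v(\lambda)\rangle=\langle H_{IJ}(\lambda)v(\lambda),v(\lambda)\rangle=0$ and $\langle H_{IJ}(\lambda)u,v(\lambda)\rangle=-\langle u,H_{IJ}(\lambda)v(\lambda)\rangle=-\langle u,H_{0I}(\lambda)\rangle$, and a short computation collapses the first two groups to $0$. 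The outcome is the clean identity
\[
\dot W=-\rho\,\langle u,\tfrac{d}{dt}v(\lambda(t))\rangle;
\]
the point is that its right-hand side carries an explicit factor $\rho$, which is precisely what upgrades a merely linear lower bound on $\rho$ to an exponential one — \emph{provided} $\tfrac{d}{dt}v(\lambda(t))$ can be bounded.

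The heart of the argument, and the step I expect to be the main obstacle, is exactly this: bounding $\tfrac{d}{dt}v(\lambda(t))$ uniformly, even though $|\dot u|$ blows up like $1/\rho$ as $\rho\to0$. Here I would use the coordinates obtained after the change of variables $\Phi$: the entries $h_{0i}(\lambda)$ and $h_{ij}(\lambda)$ are \emph{linear} in $(h_1,\dots,h_n)$ with coefficients depending only (smoothly) on $x$, so after the substitution $(h_1,\dots,h_k)=\rho u$ the whole dependence of $H_{0I}(\lambda)$, $H_{IJ}(\lambda)$ — hence of $v(\lambda)$ — on $u$ carries an explicit factor $\rho$; consequently $\partial_u v=O(\rho)$ uniformly on $\overline{O_{\bar\lambda}}$. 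Writing $\tfrac{d}{dt}v=\partial_\rho v\,\dot\rho+\partial_u v\,\dot u+\sum_{j>k}\partial_{h_j}v\,\dot h_j+\partial_x v\,\dot x$, the only potentially dangerous term is $\partial_u v\,\dot u=O(\rho)\cdot O(1/\rho)=O(1)$, while all remaining components of $\dot\lambda$ are bounded on the compact set $\overline{O_{\bar\lambda}}$; hence $|\tfrac{d}{dt}v(\lambda(t))|\le C_1$ and $|\dot W|\le C_1\rho$.

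It remains to close the loop. From $|\dot W|\le C_1\rho\le\tfrac{C_1}{1-\mu}W$ I get $\dot W\ge-\alpha W$ with $\alpha:=\tfrac{C_1}{1-\mu}$, so Gr\"onwall gives $W(t)\ge e^{-\alpha t}W(0)$ on $I$ and therefore
\[
\rho(t)\ge\frac{W(t)}{1+\mu}\ge\frac{e^{-\alpha t}W(0)}{1+\mu}\ge\frac{1-\mu}{1+\mu}\,e^{-\alpha t}\rho(0),
\]
which is the assertion with $c=\tfrac{1-\mu}{1+\mu}$. Two minor points remain to be handled: where an extremal meets the singular locus $\{\rho=0\}$ — by Proposition \ref{809} (applicable thanks to (\ref{cond_eqq})) this occurs only at isolated instants — the inequality extends by continuity; and the hypothesis that $H_{0I}(\lambda)-\langle H_{0I}(\lambda),u\rangle u-H_{IJ}(\lambda)u$ be nonvanishing on $\overline{O_{\bar\lambda}}$ (available via Claim \ref{349} and compactness) is what allows us to treat the extremal on $I$ as a genuine solution of (\ref{11100}) with no singular sub-arc. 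All told, the real work is the exact cancellation in $\dot W$ and the bound $\partial_u v=O(\rho)$; the rest is compactness bookkeeping.
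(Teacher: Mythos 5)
Your argument is correct, and it takes a genuinely different route from the paper's. The paper weights $\rho$ by the norm of the full switching field: it sets $v(\lambda)=H_{0I}(\lambda)-\left\langle H_{0I}(\lambda),u\right\rangle u-H_{IJ}(\lambda)u$ (exactly the quantity whose nonvanishing is hypothesized, so $c_1\geq\|v\|\geq c_2>0$ on $\overline{O_{\bar{\lambda}}}$) and computes $\frac{d}{dt}\bigl(\rho\,\|v\|\bigr)=\rho\,\langle v,A\rangle/\|v\|$, where $A=\dot{H}_{0I}-\langle\dot{H}_{0I},u\rangle u-\dot{H}_{IJ}u$ is bounded because time-derivatives of Hamiltonians along an extremal are Poisson brackets with bounded coefficients; the $1/\rho$-singular contributions cancel for the same structural reasons as in your computation, namely $\langle v,H_{IJ}v\rangle=0$ and $\langle u,v\rangle=0$ (using $\|u\|=1$). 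You instead weight $\rho$ by $1-\langle u,H_{IJ}^{-1}H_{0I}\rangle$, which requires $H_{IJ}(\lambda)$ to be invertible on $\overline{O_{\bar{\lambda}}}$; this is legitimate, since (\ref{cond_eqq}) together with (\ref{condin}) forces the non-degenerate scenario $(C'')$, but note that the paper's choice of weight uses only the nonvanishing hypothesis stated in the lemma and would survive even if $H_{IJ}$ were degenerate, whereas yours draws its comparability constant $\mu<1$ directly from (\ref{condin}). Your key cancellation and the resulting identity $\dot{W}=-\rho\,\langle u,\frac{d}{dt}v\rangle$ check out, and your bound on $\frac{d}{dt}v$ is sound, though it can be streamlined: $v=H_{IJ}^{-1}H_{0I}$ is a smooth function of $\lambda\in T^*M$ (its apparent $u$-dependence enters only through $\rho u=(h_1,\dots,h_k)$, which is precisely your observation $\partial_u v=O(\rho)$), and $\dot{\lambda}$ is bounded on the compact closure, so $\frac{d}{dt}v(\lambda(t))$ is bounded without tracking the $O(\rho)\cdot O(1/\rho)$ cancellation explicitly. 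One small correction: you do not need Proposition \ref{809} to handle contact with $\{\rho=0\}$; the differential inequality itself rules it out, since at a first touching time $T$ one would get $0=\rho(T)\geq ce^{-\alpha T}\rho(0)>0$, and this non-attainability of the singular locus is exactly the point of the lemma.
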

\begin{proof}
Let us call
\begin{equation}
\label{509}
v(\lambda)=H_{0I}(\lambda)-\left\langle H_{0I}(\lambda),u \right\rangle u-H_{IJ}(\lambda)u,
\end{equation}
by construction, we can assume that for all $\lambda\in \overline{O_{\bar{\lambda}}}$ it holds
$$||v(\lambda)|| > 0.$$
Since in the compact set $\overline{O_{\bar{\lambda}}}$ the map $\lambda \rightarrow v(\lambda)$ is continuous and not null, then there exist constants $c_1>0$ and $c_2>0$ such that, for all $\lambda\in O_{\bar{\lambda}}$,
$$c_1 \geq ||v(\lambda)||\geq c_2>0.
$$
Given the extremal $\lambda(t)$ in $O_{\bar{\lambda}}$, we can observe that
$$\frac{d}{dt}\rho(t)||v(\lambda(t))|| =\rho(t)\frac{\left\langle v(\lambda(t)),A(\lambda(t))\right\rangle }{||v(\lambda(t))||}=\rho(t)\tilde{A}(\lambda(t))
$$
where
$$A(\lambda(t))=\dot{H}_{0I}(\lambda(t))-\left\langle \dot{H}_{0I}(\lambda(t)),u(t) \right\rangle u(t)-\dot{H}_{IJ}(\lambda(t))u(t).$$
Let us notice that for any Hamiltonian $h(\lambda)$ its time-derivative along $\lambda(t)$ is
$$\begin{array}{rcl}
\dot{h}(\lambda(t))=\{h_0+\rho,h\}(\lambda(t))&=&\{h_0,h\}(\lambda(t))+\{\rho,h\}(\lambda(t))\\
&=&\{h_0,h\}(\lambda(t))+\frac{1}{\rho}\sum^k_{i=1}h_i(\lambda(t))\{h_i,h\}(\lambda(t))\\
&=&\{h_0,h\}(\lambda(t))+\sum^k_{i=1}u_i(t)\{h_i,h\}(\lambda(t))\\
\end{array}
$$
and it is bounded.\\
As a consequence each component of $A(\lambda(t))$ is bounded too, and $\tilde{A}_{|O}$ is bounded from below by a negative constant $C$
$$\tilde{A}_{|O}\geq C.
$$
Finally, we can see that
$$\frac{d}{dt}\left[\frac{\rho(t)||v(\lambda(t))||}{\mathrm{exp}\left( \int^t_0 C \left[||v(\lambda(s))||\right]^{-1}ds \right)}\right]\geq 0,
$$
hence, for each $t\geq \tau_1$, by the monotonicity:
$$
\begin{array}{rcl}
\rho(t)&\geq& \rho(\tau_1)\,\,\frac{||v(\lambda(\tau_1))||}{||v(\lambda(t))||}\,\,\mathrm{exp}\left( \int^t_{\tau_{1}} C \left[||v(\lambda(s))||\right]^{-1}ds \right)\\
&\geq&\rho(\tau_1)\,\,\frac{c_2}{c_1}\,\,\mathrm{exp}\left( \frac{C}{c_2}(t-\tau_1) \right).
\end{array}
$$
Denoting $c:=\frac{c_2}{c_1}$ and $\alpha:=-\frac{C}{c_2}$, the thesis follows.
\end{proof}
This Lemma proves Theorem \ref{resultneq} if $H_{0I}\in H_{IJ}B^k$, because it shows that, given those conditions, every optimal extremal in $O_{\bar{\lambda}}$ does not intersect the singular locus  in finite time, and forms a smooth local flow.
\subsubsection{Case $H_{0I}\notin H_{IJ}\overline{B^k}$}
\begin{proposition}
Given condition (\ref{cond_eqq}) and assumption (\ref{condnotin}), there exists a unique extremal that passes through $\bar{\lambda}$ in finite time.
\end{proposition}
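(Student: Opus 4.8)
The plan is to desingularise the blow-up system (\ref{11100}) by the time change $d\tau=dt/\rho$, which multiplies each right-hand side of (\ref{11100}) by $\rho$ and therefore produces a system that is smooth near $\{\rho=0\}$ and for which $\{\rho=0\}$ is invariant; on $\{\rho=0\}$ the variables $x,h_{k+1},\dots,h_n$ are frozen and $u$ evolves by the spherical field in (\ref{gg}), whose zeros over $\bar\lambda$ are exactly $u_\pm$ by Claim~\ref{349}. By the implicit function theorem (using invertibility of $\pm d\,\mathrm{Id}+H_{IJ}$) these zeros persist for nearby values of $(h_{k+1},\dots,h_n,x)$, so inside $O_{\bar\lambda}$ the equilibria of the $\tau$-system form two smooth $(2n-k)$-dimensional manifolds $N_\pm$ through the points $p_\pm$ lying over $\bar\lambda$ at $u=u_\pm$. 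Linearising at $p_\pm$: since $H_{IJ}$ is skew-symmetric and $\langle H_{0I},u_\pm\rangle=\pm d$ with $d>0$, the $\rho$-direction is an eigendirection with eigenvalue $\pm d$; the linearisation restricted to $T_{u_\pm}S^{k-1}$ is $\delta u\mapsto-\langle H_{0I},\delta u\rangle u_\pm\mp d\,\delta u-H_{IJ}\delta u$, for which $\langle\delta u,\cdot\rangle=\mp d\,||\delta u||^2$, so all its eigenvalues have real part $\mp d$; and the directions tangent to $N_\pm$ are central (the coupling is triangular, feeding only $\rho$ into $x$ and the $h_j$). Hence $p_+$ is partially hyperbolic with a one-dimensional unstable and a $(k-1)$-dimensional stable normal direction, $p_-$ with a one-dimensional stable and a $(k-1)$-dimensional unstable normal direction, and $N_\pm$ are their centre manifolds, consisting entirely of equilibria.

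For existence I would invoke the (un)stable manifold theorem for partially hyperbolic equilibria: $W^u(p_+)$ is a one-dimensional $C^\infty$ curve tangent to $\partial_\rho$ whose branch into $\{\rho>0\}$ is a single orbit $\gamma^+$, and $W^s(p_-)$ likewise contains a single orbit $\gamma^-$ in $\{\rho>0\}$. Since $\rho\sim e^{d\tau}$ along $\gamma^+$ as $\tau\to-\infty$, the integral $t=\int\rho\,d\tau$ converges, so in the original time $\gamma^+$ issues from $\bar\lambda$ at a finite instant $\bar t$; symmetrically $\gamma^-$ arrives at $\bar\lambda$ in finite time. Their concatenation at $\bar\lambda$ is a continuous curve solving the maximised Hamiltonian system off $\bar t$, and at $\bar t$ every control is maximising because $h_1(\bar\lambda)=\dots=h_k(\bar\lambda)=0$; so it is an extremal through $\bar\lambda$ with a single switch, and $u(\bar t\mp0)=u_\mp=[\mp d\,\mathrm{Id}+H_{IJ}]^{-1}H_{0I}$, which is exactly (\ref{jumpi})--(\ref{iii}).

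For uniqueness, let $\lambda(\cdot)$ be any extremal in $O_{\bar\lambda}$ with $\lambda(\bar t)=\bar\lambda$. By Proposition~\ref{809} it is not singular near $\bar t$, so by Proposition~\ref{776} and Corollary~\ref{017} it is a concatenation of bang arcs solving the $\tau$-system; invariance of $\{\rho=0\}$ forces the incoming arc to reach $\rho=0$ only as $\tau\to+\infty$ and the outgoing one only as $\tau\to-\infty$, and finiteness of the $t$-time forces $\int\rho\,d\tau<\infty$ on each side. Along the incoming arc $\rho\to0$ while $x$ and the $h_j$ tend to their values at $\bar\lambda$, so the $\omega$-limit of $u$ is a connected compact set invariant under $v_0(u)=H_{0I}-\langle H_{0I},u\rangle u-H_{IJ}u$; it cannot meet $u_+$, since there $\dot\rho/\rho=\langle H_{0I},u\rangle\to d>0$ would make $\rho$ grow, so it equals $\{u_-\}$, the incoming arc lies on the one-dimensional $W^s(p_-)$ and coincides with $\gamma^-$; dually the outgoing arc coincides with $\gamma^+$, and $\lambda(\cdot)$ is the extremal constructed above. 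The main obstacle is precisely the step just used — that this $\omega$-limit must be a single equilibrium: one has to exclude nontrivial recurrence of $v_0$ on $S^{k-1}$ (limit cycles or exotic minimal sets disjoint from $\{u_+,u_-\}$), so that "reaching $\bar\lambda$ in finite time" genuinely pins the incoming $u$-trajectory to $u_-$. This is where one must exploit that $v_0$ is the gradient of the height function $u\mapsto\langle H_{0I},u\rangle$ plus the Killing field $u\mapsto-H_{IJ}u$, together with $\langle H_{0I},u_\pm\rangle=\pm d$, in order to see that $u_+$ is a global attractor in $S^{k-1}\setminus\{u_-\}$.
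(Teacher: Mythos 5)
Your construction of the switching extremal follows the paper's own route almost exactly: the same desingularizing time change leading to (\ref{11100s}), the same identification of $\{\rho=0\}$ as an invariant set carrying manifolds of equilibria, the same linearization at $\bar\lambda_{u_\pm}$ (eigenvalue $\pm d$ in the $\rho$-direction, real part $\mp d$ on $T_{u_\pm}S^{k-1}$, and $2n-k$ central directions), the same appeal to the invariant-manifold picture of Theorem \ref{shosh}, and the same estimate $\Delta t=\int\rho\,ds<\infty$ to return to the original time. That part is sound and matches the paper.

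The gap is the one you flag yourself, and it is genuine: your uniqueness argument requires knowing that every connected compact invariant set of the spherical field $v_0(u)=H_{0I}-\langle H_{0I},u\rangle u-H_{IJ}u$ on $S^{k-1}$ avoiding $u_+$ reduces to $\{u_-\}$ --- equivalently, that every non-equilibrium orbit of (\ref{777}) tends to $u_+$ as $s\to+\infty$ and to $u_-$ as $s\to-\infty$. Without this, a limit cycle or an exotic minimal set of $v_0$ disjoint from $\{u_\pm\}$ could be the $\omega$-limit of the incoming $u$-trajectory, and for $k\geq 3$ there is no Poincar\'e--Bendixson theorem to exclude it. The remedy you sketch (``gradient of the height function plus a Killing field'') does not close the gap as stated: the height function $u\mapsto\langle H_{0I},u\rangle$ is \emph{not} a Lyapunov function for the sum, since along (\ref{777}) one has $\frac{d}{ds}\langle H_{0I},u\rangle=\|H_{0I}-\langle H_{0I},u\rangle u\|^2-\langle H_{0I},H_{IJ}u\rangle$ and the second term has no definite sign. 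The paper supplies the missing fact as Proposition \ref{7771}, proved by a genuinely different, essentially linear argument: orbits of (\ref{777}) are projectivizations of orbits of the linear system (\ref{2co}) on $\mathbb{R}^{k+1}$, which preserves the Lorentz form $Q(x,y)=x^2-|y|^2$ and hence the cone $C=\{x^2=|y|^2\}$; the vectors $(1,u_\pm)$ are eigenvectors with eigenvalues $\langle H_{0I},u_\pm\rangle=\pm d$, and on the complementary invariant subspace $E=T_{(1,u_+)}C\cap T_{(1,u_-)}C$ the form $Q$ is sign-definite, so that component stays bounded and the exponential modes dominate in both time directions. You need to reproduce this lemma, or find a substitute for it, before the uniqueness --- and hence the proposition as stated --- is actually established.
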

\begin{proof}
Let us prove that there is a unique solution of the system (\ref{11100}) passing through its point of discontinuity $\bar{\lambda}$ in finite time.\\
In order to detect solutions that go through $\bar{\lambda}$, we rescale the time considering the time $t(s)$ such that $\frac{d}{ds}t(s)=\rho(s)$ and we obtain the following system
\begin{equation}
\label{11100s}
\left\lbrace \begin{array}{l}
x'=\rho\left(f_0(x)+f_{u}(x)\right)\\
\rho' = \rho\left\langle H_{0I}(\lambda),u \right\rangle  \\
u' =  H_{0I}(\lambda)-\left\langle H_{0I}(\lambda),u \right\rangle u-H_{IJ}(\lambda)u \\
{h'}_{j}=\rho\left(h_{0j}(\lambda)+h_{u j}(\lambda)\right), \quad j\in\{k+1,\hdots,n\}.
\end{array}\right.
\end{equation}
with a smooth right-hand side.\\
This system has an invariant subset $\{\rho=0\}$ in which only the $u$-component is moving. Moreover, as we saw from Claim \ref{349}, at $\bar{\lambda}\in \{\rho=0\}$ there are two equilibria $\bar{\lambda}_{u_-}$ and $\bar{\lambda}_{u_+}$, such that $\left\langle H_{0I},u_+ \right\rangle  >0$ and $\left\langle H_{0I},u_- \right\rangle  <0$.\\

Let us present the Shoshitaishvili's Theorem \cite{sh} that explain how is the behaviour of the solutions in $O_{\bar{\lambda}_{u_-}}$ and $O_{\bar{\lambda}_{u_+}}$ neighbourhoods of the equilibria $\bar{\lambda}_{u_-}$ and $\bar{\lambda}_{u_+}$ in $T^*M$.
\begin{theorem}[Shoshitaishvili's Theorem]
\label{shosh}
In a n-dimensional manifold $N$ with $\lambda\in N$, let 
\begin{equation}
\label{202}
\dot{\lambda}=f(\lambda)
\end{equation}
a dynamical system in $N$, where $f\in\mathcal{C}^k(N)$, $2\leq k<\infty$. Given $\bar{\lambda}\in N$ there exists an opportune neighbourhood $O_{\bar{\lambda}}$ such that, via the coordinate chart, (\ref{202}) is described by the following system in $\mathbb{R}^n$
\begin{equation}
\label{222}
\dot{z}=Bz+r(z),\quad  z\in\mathbb{R}^n,
\end{equation}
where $r\in C^k(\mathbb{R}^n)$, $r(0)=0$, $\partial_{z}r_{|0}=0$, and $B:\mathbb{R}^n\rightarrow \mathbb{R}^n$ is a linear operator whose eigenvalues are divided into three groups:
$$
\begin{array}{c}
\begin{array}{rcl}
\mathrm{I}&=&\{ \mu_i, 1\leq i \leq k^0|\, \mathrm{Re}\mu_i=0 \}\\
\mathrm{II}&=&\{ \mu_i, k^0+1\leq i \leq k^0+k^-|\,\mathrm{Re}\mu_i<0 \}\\
\mathrm{III}&=&\{ \mu_i, k^0+k^-+1\leq i \leq k^0+k^-+k^+|\,\mathrm{Re}\mu_i>0 \}
\end{array}
\\
\\
\\
k^0+k^-+k^+=n.
\end{array}
$$
Let the subspaces of $\mathbb{R}^n$, which are invariant with respect $B$ and which correspond to these groups be denoted by $X$, $Y^-$ and $Y^+$ respectively, and let $Y^-\times Y^+$ be denoted by $Y$.\\
Then the following assertions are true:\\
\begin{enumerate}
\item There exists a $\mathcal{C}^{k-1}$ manifold $\gamma^0$ that is invariant with respect to (\ref{202}), may be given by the graph of mapping $\gamma^0:X\rightarrow Y$, $y=\gamma^0(x)$, and satisfies $\gamma^0(0)=0$ and $\partial_x \gamma^0(0)=0$.
\item The system (\ref{202}) in $O_{\bar{\lambda}}$ is homeomorphic to the product of the multidimensional saddle $\dot{y}^+=y^+$, $\dot{y}^-=-y^-$, and 
$$
\dot{x}=\hat{B}x+r_1(x)
$$
where $r_1(x)$ is the $x$-component of the vector $r(z)$, $z=(x,\gamma^0(x))$, i.e. (\ref{202})in $O_{\bar{\lambda}}$ is homeomorphic to the system
$$\left\lbrace \begin{array}{l}
\begin{array}{ll}
\dot{y}^+=y^+, & \dot{y}^-=-y^-
\end{array}\\
\begin{array}{ll}
\dot{x}=\hat{B}x+r_1(x).
\end{array}
\end{array} \right.
$$
\end{enumerate}
\end{theorem}
Due to the fact that $\bar{\lambda}_{u_-}$ and $\bar{\lambda}_{u_+}$ belong to the invariant subset $\{
\rho=0\}$, where the components $\rho$, $h_j$ with $j\in\{k+1,\hdots,n\}$ and $x$ are fixed, we can observe that Jacobian matrix of (\ref{11100s}) have the following eigenvalues: $\left\langle H_{0I},u_{\pm} \right\rangle $ that corresponds to the $\rho$-coordinate, the eigenvalues of the matrix ${\partial_{u}v(\bar{\lambda}_u)}_{|\bar{\lambda}_{u_\pm}}$, recalling notation (\ref{509}), that correspond to the $u$-coordinate, and $2n-k$ $0$-eigenvalues corresponding to the other coordinates.\\

Thus, let us study ${\partial_{u}v(\bar{\lambda}_u)}_{|\bar{\lambda}_{u_\pm}}$ that has the following form
\begin{equation}
\label{ooo}{\partial_{u}v(\bar{\lambda}_u)}_{|\bar{\lambda}_{u_\pm}}=-\left[ \left\langle H_{0I},u_{\pm} \right\rangle \mathrm{Id}+H_{IJ}+u_\pm\,H_{0I}^T \right]
\end{equation}
where $H_{0I}^T$ is the row vector. \\
Let us prove that the real part of its eigenvalues is equal $-\left\langle  H_{0I},u_\pm\right\rangle $.\\

Let $\alpha+i \beta$ be an eigenvalue of ${\partial_{u}v(\bar{\lambda}_u)}_{|\bar{\lambda}_{u_\pm}}$ with $w_R+iw_I\neq 0$ eigenvector, as a consequence we can claim that
$$\left\lbrace
\begin{array}{l}
{\partial_{u}v(\bar{\lambda}_u)}_{|\bar{\lambda}_{u_\pm}}w_R=\alpha w_R-\beta w_I\\
{\partial_{u}v(\bar{\lambda}_u)}_{|\bar{\lambda}_{u_\pm}}w_I=\alpha w_I +\beta w_R.
\end{array}
 \right. 
$$
Thus, it holds $\left\langle {\partial_{u}v(\bar{\lambda}_u)}_{|\bar{\lambda}_{u_\pm}}w_R,w_R\right\rangle+\left\langle {\partial_{u}v(\bar{\lambda}_u)}_{|\bar{\lambda}_{u_\pm}}w_I,w_I\right\rangle=\alpha (|w_R|^2+|w_I|^2)$, and it implies $$-\left\langle  H_{0I},u_\pm\right\rangle (|w_R|^2+|w_I|^2)=\alpha(|w_R|^2+|w_I|^2),$$
because $w_R$ and $w_I$ are orthogonal to $u_\pm$. Since $w_R+iw_I\neq 0$, it holds $$\alpha =-\left\langle  H_{0I},u_\pm\right\rangle.$$

By Claim \ref{349}, we know that $\left\langle H_{0I},u_{-} \right\rangle$ and $\left\langle H_{0I},u_{+} \right\rangle$ are not null with opposite sign. Hence, assuming $\left\langle H_{0I},u_{-} \right\rangle<0$, we can conclude that in a neighbourhood of $\bar{\lambda}_{u_-}$ there is a stable 1-dimensional submanifold with respect to $\rho$ and an unstable submanifold with respect to $u$. Analogously in a neighbourhood of $\bar{\lambda}_{u_+}$, we can notice the unstable 1-dimensional submanifold with respect to $\rho$ and the stable one with respect to $u$.

Central manifolds $\gamma^0$ of Theorem \ref{shosh} applied to the equilibria $\bar\lambda_{u_\pm}$ are ($2n-k$)-dimensional submanifolds defined by the equations $\rho=0,\ u=u_\pm$. The dynamics on the central manifold is trivial: all points are equilibria.

Hence, according to the Shoshitaishvili Theorem, there is a trajectory from the one-dimensional asymptotically stable invariant submanifold that tends to the equilibrium point
$\bar\lambda_{u_-}$ as $s\to +\infty$, and analogously there is a trajectory from the one-dimensional asymptotically unstable invariant submanifold that escapes from the equilibrium point
$\bar\lambda_{u_+}$ as $s\to -\infty$.\\

In order to obtain that exactly one solution of (\ref{11100s}) enters submanifold $\rho=0$ at $\bar\lambda_{u_-}$ and exactly one goes out of this submanifold at $\bar\lambda_{u_+}$, let us present together with Shoshitaishvili Theorem the following Proposition \ref{7771}, that  shows the behaviour of solutions  with rescaled time $s$, in the subset $\{\rho=0\}$ where only the $u$-component is moving with respect to the equation 
\begin{equation}
\label{777}
u' =  H_{0I}-\left\langle H_{0I},u \right\rangle u-H_{IJ}u.
\end{equation}
Then it is completely described the whole phase portrait of the system (\ref{11100s}).\\
\begin{center}
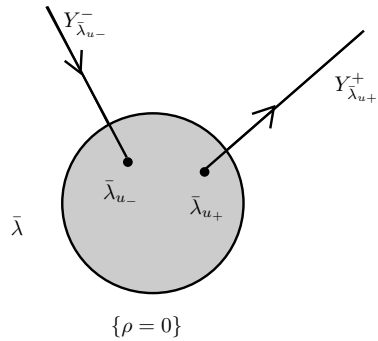

\psscalebox{.8 .8} 
{
\begin{pspicture}(0,-2.7826421)(5.74,2.7826421)
\definecolor{colour0}{rgb}{0.8,0.8,0.8}
\pscircle[linecolor=black, linewidth=0.04, fillstyle=solid,fillcolor=colour0, dimen=outer](2.37,-0.4867046){1.51}
\psline[linecolor=black, linewidth=0.04](0.62,2.7732954)(1.94,0.2732954)
\psline[linecolor=black, linewidth=0.04](5.84,2.3732953)(3.18,0.033295404)
\psdots[linecolor=black, dotsize=0.16](3.22,0.033295404)
\psdots[linecolor=black, dotsize=0.16](1.96,0.1932954)
\psline[linecolor=black, linewidth=0.04](0.92,1.8732954)(1.18,1.6732954)(1.2,2.0532954)
\psline[linecolor=black, linewidth=0.04](4.0,0.99329543)(4.36,1.1132954)(4.3,0.7532954)
\psline[linecolor=black, linewidth=0.03](3.58,0.39329544)(5.8,2.3332953)
\rput[bl](2.94,-0.28670457){\Large}
\rput[bl](2.94,-0.78670457){$\bar{\lambda}_{u_+}$}
\rput[bl](1.52,-0.033569816){\Large}
\rput[bl](1.52,-0.533569816){$\bar{\lambda}_{u_-}$}
\psline[linecolor=black, linewidth=0.03](1.64,0.8132954)(0.64,2.7732954)
\rput[bl](5.32,1.1732954){$Y^+_{\bar{\lambda}_{u+}}$}
\rput[bl](0.86,2.2332954){$Y^-_{\bar{\lambda}_{u-}}$}
\rput[bl](1.64,-2.7067046){\Large}
\rput[bl](1.64,-2.7067046){$\{\rho=0\}$}
\rput[bl](0.0,-1.0067046){\Large}
\rput[bl](0.0,-1.0067046){$\bar{\lambda}$}
\end{pspicture}
}

\captionof{figure}{Solution of (\ref{11100s}) that passes through $\bar{\lambda}\in \Lambda$. }
\end{center}
\bigskip
\begin{proposition}
\label{7771} Let $u(s),\ s\in\mathbb R$, be a solution of system (\ref{777}) that is not an equilibrium. Then
$u(s)\to u_{\pm}$ as $s\to\pm\infty$.
\end{proposition}
\begin{proof}
 Let $y(t)$ be a solution of the system $\dot y=|y|H_{0I}-H_{IJ}y,\ y\in\mathbb R^k$, then
$u(t)=\frac 1{|y(t)|}y(t)$ satisfies system (\ref{777}). Consider a linear $(k+1)$-dimensional system
\begin{equation}
\label{2co}\dot x=\langle H_{0I},y\rangle, \quad \dot y=xH_{0I}-H_{IJ}y.
\end{equation}
Its solutions preserve the Lorentz form $Q(x,y)=x^2-|y|^2$ and, in particular, the cone
$$
C=\{(x,y)\in\mathbb R^{k+1}:x^2=|y|^2\}.
$$
We obtain that $s\mapsto y(s)$ is a solution of system $\dot y=|y|H_{0I}-H_{IJ}y,\ y\in\mathbb R^k$ if and only if 
$s\mapsto (|y(s)|,y(s))$ is a solution of (\ref{2co}).

System (\ref{2co}) has a form $\dot z=Bz$, where $z=(x,y)$ and $B$ is a \linebreak $(k+1)\times(k+1)$-matrix.
Moreover, vectors $(1,u_\pm)$ are eigenvectors of the matrix $B$ with eigenvalues $\langle H_{0I},u_\pm\rangle$.
System $\dot z=Bz$ preserves any invariant subspace of $B$ and in particular hyperplanes
$T_{(1,u_\pm)}C$. Note that the projectivization of $C$ is a strictly convex cone, hence
$C\cap T_{(1,u_\pm)}C=\mathrm{span}\{(1,u_\pm)\}$. 

We obtain that a co-dimension two subspace $E=T_{(1,u_\pm)}C\cap T_{(1,u_\pm)}C$ has zero
intersection with $C$. It follows that quadratic form $Q$ is sign-definite on the subspace $E$. Hence all solutions of system $\dot z=Bz$ that belong to the invariant subspace $E$ are bounded for both positive and negative time. Any solution of system $\dot z=Bz$ has a form:
$$
s\mapsto c_+e^{s\langle H_{0I},u_+\rangle}(1,u_+)+c_-e^{s\langle H_{0I},u_-\rangle}(1,u_-)+e(s),
$$
where $e(s)\in E$.
Recall that $\langle H_{0I},u_+\rangle$ is positive and $\langle H_{0I},u_-\rangle$ is negative. Collecting now all the information we obtain that any nonzero solution of system $\dot z=Bz$ that belong to the invariant cone $C$  asymptotically tends to the line $\mathrm{span}\{(1,u_\pm)\}$ as $s\to\pm\infty$. \\
\begin{center}
\psscalebox{.8 .8} 
{
\begin{pspicture}(0,-2.554099)(5.6482844,2.554099)
\definecolor{colour0}{rgb}{0.8,0.8,0.8}
\pscircle[linecolor=black, linewidth=0.04, fillstyle=solid,fillcolor=colour0, dimen=outer](2.5549216,-0.080781706){2.16}
\psdots[linecolor=black, dotsize=0.18](1.6749215,-0.6407817)
\psdots[linecolor=black, dotsize=0.2](3.4349215,0.57921827)
\psbezier[linecolor=black, linewidth=0.04](1.6749215,-0.6807817)(1.8749216,-1.0407817)(3.0549216,-1.9207817)(3.6549215,-1.860781707763672)
\psbezier[linecolor=black, linewidth=0.04](1.3349216,1.6392183)(0.95492154,1.1592183)(2.4349215,-1.2207817)(4.1149216,-1.5007817077636718)
\psbezier[linecolor=black, linewidth=0.04](1.6949216,1.8792183)(1.4749216,1.5592183)(2.6549215,0.51921827)(3.3949215,0.5992182922363282)
\psline[linecolor=black, linewidth=0.04](0.01492157,-2.5407817)(1.6549215,-0.7007817)
\psline[linecolor=black, linewidth=0.04](3.4749215,0.5992183)(5.6349216,2.5392182)
\psline[linecolor=black, linewidth=0.04](4.4549217,1.8392183)(4.9149218,1.8792183)(4.7949214,1.4392183)
\psline[linecolor=black, linewidth=0.04](0.23492157,-1.8407817)(0.6549216,-1.8007817)(0.57492155,-2.2007818)(0.57492155,-2.2007818)
\rput[bl](0.57492155,-0.5607817){$u_-$}
\rput[bl](3.2749217,0.05921829){$u_+$}
\rput[bl](3.0949216,-1.0007817){$\tilde{u}(s)$}
\psbezier[linecolor=black, linewidth=0.04, linestyle=dashed, dash=0.17638889cm 0.10583334cm](1.6749215,-0.7007817)(1.5349215,-0.8007817)(2.263984,-2.0207818)(2.9749215,-2.2007817077636718)
\psbezier[linecolor=black, linewidth=0.04, linestyle=dashed, dash=0.17638889cm 0.10583334cm](1.0549216,1.4592183)(0.7349216,0.57921827)(2.4549215,-1.3607817)(3.9149215,-1.660781707763672)
\psbezier[linecolor=black, linewidth=0.04, linestyle=dashed, dash=0.17638889cm 0.10583334cm](1.4549216,1.7592183)(1.1749215,1.2592183)(2.9999764,0.11921829)(3.3549216,0.5392182922363281)
\rput[bl](0.05492157,0.2192183){$u(s)$}
\psline[linecolor=black, linewidth=0.04](2.0349216,-1.7207817)(2.2949216,-1.7807817)(2.2949216,-1.5807817)
\psline[linecolor=black, linewidth=0.04](2.7949216,-1.8007817)(3.0349216,-1.7607818)(2.9349215,-1.5207818)
\psline[linecolor=black, linewidth=0.04](1.4949216,0.3392183)(1.7549216,0.2192183)(1.7549216,0.4592183)(1.7749215,0.4192183)
\psline[linecolor=black, linewidth=0.04](1.3149216,-0.060781706)(1.6549215,-0.16078171)(1.6349216,0.11921829)
\psline[linecolor=black, linewidth=0.04](2.2149215,0.5592183)(2.4549215,0.6192183)(2.2949216,0.8592183)
\psline[linecolor=black, linewidth=0.04](2.0149217,1.0592183)(2.2549217,0.9992183)(2.1549215,1.2992183)(2.1549215,1.2992183)
\end{pspicture}
}

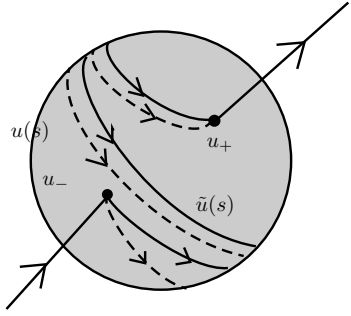
\captionof{figure}{ Two distinct solution $u(s)$ and $\tilde{u}(s)$ of (\ref{777}). }
\end{center}
\end{proof}

Once we have study the system (\ref{11100s}) with rescaled time $s$, we are going to show that the trajectory that we found, which enters in $\bar\lambda_{u^-}$ and goes out from $\bar\lambda_{u^+}$, is an extremal of the system (\ref{11100}) that passes through $\bar{\lambda}$ in finite time.\\

Thus, let us estimate the time $\Delta t$ that this extremal needs to reach $\bar{\lambda}$.

Due to the facts that $\left\langle H_{0I},u_- \right\rangle <0$ and $\left\langle H_{0I}(\lambda),u \right\rangle$ at $\bar{\lambda}_{u_-}$ is continuous with respect to $\lambda_u$, there exist a neighbourhood $O_{\bar{\lambda}_{u-}}$ of $\bar{\lambda}_{u_-}$, in which $\left\langle H_{0I}(\lambda),u \right\rangle$ is bounded from above by a negative constant $c_1<0$, namely ${\left\langle H_{0I}(\lambda),u \right\rangle}_{|O_{\bar{\lambda}_{u-}}}<c_1<0$. \\
Hence, in $O_{\bar{\lambda}_{u-}}$ we have the following estimate of the derivative $\rho'$ $$ \rho'=\rho\, \left\langle H_{0I}(\lambda),u \right\rangle<\rho \,c_1,$$
consequently until $\rho(s)>0$, it holds $$ \int^s_{s_0} \frac{\rho'}{\rho}ds<\int^s_{s_0} c_1 ds,$$
then this inequality implies $\log(\rho(s))<c_1 (s-s_0)+\log(\rho(s_0))$, and so $$\rho(s)<\rho(s_0)e^{c_1 (s-s_0)}.$$
Since $\frac{d}{ds}t(s)=\rho(s)$, the amount of time that we want to estimate is the following
$$\Delta t=\lim_{s\rightarrow \infty}t(s)-t(s_0)=\int^{\infty}_{s_0} \rho(s) ds,
$$
therefore,
$$\Delta t=\int^{\infty}_{s_0} \rho(s) ds<\rho(s_0)\int^{\infty}_{s_0} e^{c_1 (s-s_0)}ds=\frac{\rho(s_0)}{-c_1}<\infty.
$$
The amount of time in which this extremal goes out from $\bar{\lambda}$ may be estimate in an analogous way.
\end{proof}
By the previous Proposition and the fact that every extremal out of $\Lambda$ is smooth, it is proven that there exist a neighbourhood $O_{\bar\lambda}\subset T^*M$ such that for any $z\in O_{\bar\lambda}$ and $\hat{t}>0$ there exists a unique extremal $t\mapsto\lambda(t,z)$ contained in $O_{\bar\lambda}\subset T^*M$ with condition $\lambda(\hat{t},z)=z$.

Let us conclude the proof with the following Proposition.
\begin{proposition}
\label{prop}
The map $(t,z)\rightarrow\lambda(t,z)$ continuously depends on $(t,z)\in I \times O_{\lambda}$.
\end{proposition}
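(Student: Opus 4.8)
The plan is to transport the statement into the blow-up picture of the previous subsection, where $t\mapsto\lambda(t,z)$ is, up to a shift of the time variable, the flow of the \emph{smooth} system (\ref{11100s}) with time reparametrised by $\frac{dt}{ds}=\rho$, glued to the smooth Hamiltonian flow of $\mathcal H$ given by Proposition \ref{776} away from $\Lambda$. It suffices to show that $\lambda(\cdot,z)\to\lambda(\cdot,z_0)$ uniformly on every compact subinterval $J\subset I$ as $z\to z_0$, since $t\mapsto\lambda(t,z_0)$ is continuous. If $\lambda(\cdot,z_0)$ avoids $\Lambda$ on $J$, this is the classical continuous (indeed smooth) dependence of solutions of $\dot\lambda=\vec{\mathcal H}(\lambda)$ on initial data: a closed tube around the arc $\lambda(J,z_0)$ still misses $\Lambda$, and $\mathcal H$ is smooth there. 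So I may assume that $\lambda(\cdot,z_0)$ meets $\Lambda$ at a crossing time $\bar t\in J$ (a unique such $\bar t$, by the preceding analysis), with $\lambda(\bar t,z_0)=\bar\lambda$; then only continuity \emph{across} the crossing is at issue.

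To treat the crossing I would use the phase portrait from the proof of the preceding Proposition. In the blow-up and rescaled time $s$, the extremal of $z_0$ traverses the one-dimensional asymptotically stable submanifold of the equilibrium $\bar\lambda_{u_-}$ of (\ref{11100s}) as $s\to+\infty$ (that is, as $t\uparrow\bar t$), then — both equilibria projecting to $\bar\lambda$ after blowing down — the one-dimensional asymptotically unstable submanifold of $\bar\lambda_{u_+}$ as $s\to-\infty$ (that is, as $t\downarrow\bar t$), the two equilibria being joined, inside the invariant set $\{\rho=0\}$, by the heteroclinic $u_-\to u_+$ of Proposition \ref{7771}. Fix small neighbourhoods $W_\pm$ of $\bar\lambda_{u_\pm}$ in blow-up coordinates. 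The argument rests on three facts. (i) Outside $W_-\cup W_+$, the flow of (\ref{11100s}) over a compact $s$-interval, and the reparametrisation $t(s)=\int\rho\,ds$, depend continuously on the initial data. (ii) By the inclination lemma at the partially hyperbolic equilibria $\bar\lambda_{u_\pm}$ — whose centre directions are the $x$- and $(h_{k+1},\dots,h_n)$-coordinates and carry only equilibria — an orbit of (\ref{11100s}) entering $W_-$ close to the stable submanifold of $\bar\lambda_{u_-}$ stays $\mathcal{C}^0$-close to the above concatenation and leaves $W_+$ close to the unstable submanifold of $\bar\lambda_{u_+}$. (iii) As in the preceding Proposition, on $W_\pm$ one has $\rho'=\rho\langle H_{0I}(\lambda),u\rangle$ with $\langle H_{0I}(\lambda),u\rangle$ of constant sign, so $\rho$ decays exponentially in $s$ on $W_-$ and grows exponentially in $s$ on $W_+$; hence, although the $s$-length of the sojourn in $W_-\cup W_+$ is unbounded, $\int\rho\,ds$ over it is $O(\mathrm{diam}\,W_-)+O(\mathrm{diam}\,W_+)$, so the total $t$-time spent in $W_-\cup W_+$ can be made $<\varepsilon$ by shrinking $W_\pm$, and $\lambda$ remains in a prescribed neighbourhood of $\bar\lambda$ throughout.

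Patching these: for $z$ close to $z_0$, the extremal of $z$ follows the smooth flow until it meets $\partial W_-$, by (i) at a point close to where $\lambda(\cdot,z_0)$ enters $W_-$, hence close to the stable submanifold of $\bar\lambda_{u_-}$; by (ii)–(iii) it then crosses $W_-\cup W_+$ in $t$-time $<\varepsilon$ while staying near $\bar\lambda$ and leaves $W_+$ near the unstable submanifold of $\bar\lambda_{u_+}$, i.e. near $\lambda(\cdot,z_0)$ again; and by (i) it thereafter follows the smooth flow, staying close to $\lambda(\cdot,z_0)$. Thus $\lambda(\cdot,z)\to\lambda(\cdot,z_0)$ uniformly on $J$, and, $\varepsilon$ being arbitrary, $(t,z)\mapsto\lambda(t,z)$ is continuous at $(t_0,z_0)$, hence on $I\times O_{\bar\lambda}$.

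The main obstacle is fact (ii) together with the bookkeeping between the rescaled time $s$ and the time $t$: near $\bar\lambda_{u_\pm}$ the $s$-flow lingers for an arbitrarily long $s$-interval, and one must check both that this long excursion occupies only a short, uniformly bounded $t$-interval and that the orbit emerges pinned to the unstable submanifold of $\bar\lambda_{u_+}$ — not merely to some neighbourhood of $\bar\lambda$ — so that continuity survives the crossing. The partial hyperbolicity established via the Shoshitaishvili Theorem (the $\rho$-eigenvalue $\langle H_{0I},u_\pm\rangle$ is sign-definite, the centre dynamics trivial) is precisely what makes both work: it yields the exponential $\rho$-estimate of (iii) and puts the equilibria $\bar\lambda_{u_\pm}$ in the scope of the inclination lemma, whose partially hyperbolic version is legitimate here because the centre directions carry no dynamics.
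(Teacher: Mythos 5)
Your proposal is correct and follows essentially the same route as the paper: both arguments reduce the problem to the crossing of the singular locus, control the geometry there by the invariant-manifold/partially-hyperbolic structure of the equilibria $\bar\lambda_{u_\pm}$ (the paper cites \cite{HPS} where you invoke the inclination lemma), and conclude with the same three-region decomposition and the exponential estimate $\rho(s)<\rho(s_0)e^{M_R(s-s_0)}$ showing that the $t$-time spent near the singular locus tends to zero with the neighbourhood. Your version merely spells out more explicitly the shadowing step that the paper delegates to the reference.
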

\begin{proof}
At first let us observe that  for all singular point $\lambda\in O_{\bar{\lambda}}$ the phase portrait in the rescaled time after blow up have the same structure. Moreover, the splitting of the phase space on the hyperbolic and central part continuously depend on $\lambda$. This follows from basic facts on invariant submanifold, see \cite{HPS} for details. 

To guarantee continuity of the map $(t,z)\mapsto \lambda(t,z)$ it remains to prove that for each $\varepsilon>0$ there exists a neighbourhood  $O^\varepsilon_{\bar{\lambda}}$ such that the maximum time interval of the extremals in this neighbourhood $\Delta_{O^\varepsilon_{\bar{\lambda}}}t$ is less than $\varepsilon$.

As we saw previously, the solution of (\ref{11100s}) through $\bar{\lambda}$ arrives and goes out at $u_-$ and $u_+$. Let us fix two neighbourhoods $O_{\bar{\lambda}_{u_+}}$ of $\bar{\lambda}_{u_+}$ and $O_{\bar{\lambda}_{u_-}}$ of $\bar{\lambda}_{u_-}$,  we can distinguish three parts of any trajectory close to $\bar{\lambda}$: the parts in $O_{\bar{\lambda}_{u_-}}$ and in $O_{\bar{\lambda}_{u_+}}$, and the part between those neighbourhoods.\\
In this last region,  since each $\rho$-component is close to $0$ and the corresponding time interval with time $s$ is uniformly bounded, as we saw in Proposition \ref{7771}, then $\Delta t$ is arbitrarily small with respect to $O_{\bar{\lambda}}$.\\
Hence, in $O_{\bar{\lambda}_{u_-}}$ we are going to show that there exists a sequence of neighbourhoods of $\bar{\lambda}_{u_-}$
$$\left(O^R_{u_-}\right)_R,$$
such that $$\lim_{R\rightarrow 0^+}\Delta_{O^R_{u_-}}t=0.$$
For simplicity, we are going to prove this fact in $O_{\bar{\lambda}_{u_-}}$, because the situations in $O_{\bar{\lambda}_{u_+}}$ is equivalent.\\
Let us denote $O^R_{u_-}$ a neighbourhood of $\bar{\lambda}_{u_-}$ such that $O^R_{u_-}\subseteq O_{\bar{\lambda}_{u_-}}$, for each $\lambda\in O^R_{u_-}$ $\rho<R$ and $||u-u_-||<R$. Therefore, we can define
$$M_R=\sup_{\lambda\in O^R_{u_-}}\left\langle H_{0I}(\lambda),u \right\rangle,$$
 and assume that it is strictly negative and finite, due to the fact that  we can choose $O_{\bar{\lambda}_{u_-}}$ in which $\left\langle H_{0I}(\lambda),u \right\rangle$ is strictly negative and finite.\\
Hence, for every $\lambda(t(s))$ in $O^R_{u_-}$, until its $\rho$-component is different that zero, it holds
$$\frac{\dot{\rho}(s)}{\rho(s)}<M_R,
$$
then $$\rho(s)<\rho(s_0)e^{M_R(s-s_0)},$$ for every $s>s_0$.\\
Consequently, $\Delta_{O^R_{u_-}}t$ can be estimated in the following way:
$$\Delta_{O^R_{u_-}}t<\int^{\infty}_{s_0}\rho(s_0)e^{M_R(s-s_0)}ds=\frac{\rho(s_0)}{-M_R}<\frac{R}{-M_R}.
$$
Due to the fact that $\lim_{R\rightarrow 0^+}\frac{R}{-M_R}=0$, we have proved that for each $\varepsilon>0$ there exists $O^R_{u_-}$ such that $\Delta_{O^R_{u_-}}t<\varepsilon$. 
\end{proof}
\subsection{Proof of Proposition \ref{809} }
Let us assume that there exist a time-optimal control $\tilde{u}$, and an interval $(\tau_1,\tau_2)$ such that $\tilde{u}$ corresponds to an extremal $\lambda(t)$ in $O_{\bar{\lambda}}$, and $\lambda(t)\in \Lambda$, $\forall t\in(\tau_1,\tau_2)$. By construction, for $t\in(\tau_1,\tau_2)$ it holds
\begin{equation}
\label{556}
\left\lbrace
\begin{array}{l}
\frac{d}{dt}h_1(\lambda(t))=0\\
\vdots\\
\frac{d}{dt}h_k(\lambda(t))=0.
\end{array}
\right.
\end{equation}
Since the maximized Hamiltonian associated with $\tilde{u}$ is
$$\mathcal{H}_{\tilde{u}}(\lambda)=h_0(\lambda)+\tilde{u}_1h_1(\lambda)+\hdots+\tilde{u}_kh_k(\lambda),
$$
by Remark \ref{derivPoiss}, (\ref{556}) implies
\begin{equation*}
H_{0I}(\lambda(t))-H_{IJ}(\lambda(t))\tilde{u}=0.
\end{equation*}
Moreover, due to condition (\ref{cond_eqq}),we can claim that, choosing $O_{\bar{\lambda}}$ small enough, $H_{0I}(\lambda(t))\notin H_{IJ}(\lambda(t))\overline{B^k}$ or $H_{0I}(\lambda(t))\in H_{IJ}(\lambda(t)) B^k$, for all $t\in (\tau_1,\tau_2)$.\\
If $H_{0I}(\lambda(t))\notin H_{IJ}(\lambda(t))\overline{B^k}$, we arrive to a contradiction, because in this case $||\tilde{u}||>1$ but the norm of admissible controls is less equal than $1$. On the other hand, if $H_{0I}(\lambda(t))\in H_{IJ}(\lambda(t)) B^k$, such extremals might exist, but they are not optimal by the Goh Condition, presented at Subsection \ref{subsecgoh}.
\bigskip

\end{document}